\crefname{theorem}{theorem}{theorems}
\Crefname{theorem}{Theorem}{Theorem}
\crefname{lemma}{lemma}{lemmata}
\Crefname{lemma}{Lemma}{Lemmata}
\crefname{remark}{remark}{remarks}
\Crefname{remark}{Remark}{Remarks}
\crefname{corollary}{corollary}{corollaries}
\Crefname{corollary}{Corollary}{Corollaries}
\def\lcal{\mathcal{L}}
\def\lcalz{\lcal}
\def\lcalzpar{{\lcalz(\parr)}}
\def\lcalzpar{\lcalz(\parr)}
\def\kcal{\mathcal{K}}
\def\IPC{{\sf IPC}}
\def\HA{\hbox{\sf HA}{} }
\def\K4{\hbox{\sf K4}{} }
\def\IPC{{\sf IPC}}
\def\HA{\hbox{\sf HA}{} }
\def\NNIL{{\sf NNIL}}
\def\K4{\hbox{\sf K4}{} }
\def\ar{\mid\!\sim}
\newcommand{\adms}[2]{\mathrel{\ar}^{{\,{#1}}}_{\,{{#2}}}}
\def\admspar{\adms{}{\Gamma}}
\def\pr{\mid\!\approx}
\def\pce{\preccurlyeq}
\def\sce{\succcurlyeq}
\newcommand{\V}{\mathrel{V}}
\newcommand{\lr}{\leftrightarrow}
\def\scrk{\mathscr{K}}
\def\scrm{\mathscr{M}}
\def\ar{            \mathrel{\setlength{\unitlength}{1ex}
                    \begin{picture}(1.65,1.65)           
                    \put(0,0){\line(0,1){1.65}}                  
                    \put(0,.3){\textup{\footnotesize{$\sim$}}}
                    \end{picture}}\!
        }
\newcommand{\vdashm}{\Vdash^{^{\!\!-}}}
\newcommand{\vdashc}{\vdash^{{{\sf c}}}}
\newcommand{\vdashi}{\vdash^{{{\sf i}}}}
\newcommand{\nin}{\not\in}
\def\sfL{{\sf L}}
\def\atom{{\sf atom}}
\newcommand{\varr}{{\sf var}}
\newcommand{\parr}{{\sf par}}
\newcommand{\complexity}[1]{c_\to(#1)}
\def\max{{\sf max}}
\newcommand{\gampro}[2]{{#1}^{#2}}
\newcommand{\uap}[2]{\lceil{#1}\rceil}
\newcommand{\dap}[2]{\lfloor{#1}\rfloor}
\newcommand{\dclosur}[2]{\langle{#1}\rangle_{#2}}
\def\mcal{\mathcal{M}}
\def\nvdashi{\nvdash^{\sf i}}
\newcommand{\bisim}[1]{\rightleftharpoons_{#1}}
\newcommand{\gen}[2]{#1_{#2}}
\newcommand{\proj}[1]{\widehat{#1}}
\newcommand{\setpar}[1]{{{#1}^\parr}}
\def\Gpar{\setpar\Gamma}
\newcommand{\Val}[2]{#1(#2)}
\newcommand{\parpro}[1]{{#1}^*}
\begin{document}

\begin{frontmatter}
  \title{Projectivity meets Uniform Post-Interpolant: \\
Classical and Intuitionistic Logic}
  \author{Mojtaba Mojtahedi}\footnote{Email: mojtahedy@gmail.com. 
  This work is partially funded by FWO grant G0F8421N and BOF grant BOF.STG.2022.0042.01.}
  \author{Konstantinos Papafilippou}\footnote{Email: Konstantinos.Papafilippou@uGent.be.
  Funded by the FWO-FWF Lead Agency grant G030620N (FWO)/I4513N (FWF) and
by the SNSF--FWO Lead Agency Grant 200021L\underline{ }196176/G0E2121N.}
  \address{Ghent University}

\begin{abstract}
We examine the interplay between projectivity (in the sense that was introduced by S.~Ghilardi) 
and uniform post-interpolant for the classical and intuitionistic propositional logic.
More precisely, we explore whether a projective substitution of a formula is equivalent 
to its uniform post-interpolant, assuming the substitution leaves the variables of the interpolant unchanged.
We show that in classical logic, this holds for all formulas. 
Although such a nice property is missing in  intuitionistic logic,
we provide Kripke semantical characterisation for propositions with this property.

As a main application of this, we show that the unification type of some extensions of intuitionistic logic are finitary.
In the end, we study admissibility for intuitionistic logic, relative to some sets of formulae.

The first author of this paper recently considered a particular case of this relativised admissibility and 
found it useful in characterising the provability logic of Heyting Arithmetic.
\end{abstract}

  \begin{keyword}
  Unification, Projectivity, Admissible Rules, Intuitionistic Logic, Uniform Interpolation.
  \end{keyword}
 \end{frontmatter}




\section{Introduction}
The notion of projectivity for propositional logics, first introduced by S.~Ghilardi \cite{Ghilardi97,Ghil99,Ghil2000modal}. 
Roughly speaking, a formula $A$ is projective if 
there is a substitution $\theta$ which unifies $A$, 
i.e.~$\vdash \theta(A)$, and $A\vdash \theta(B)\lr B$ for every formula $B$.
The importance of projective formulas is mainly due to the following observation: any projective formula, 
has a single unifier which is more general than all other unifiers. 
Then, by approximating formulas with projective ones, 
we may fully describe \textit{all unifiers} 
of a given formula \cite{Ghil99,Ghil2000modal,balbiani2021unification,alizadeh2023unification}.  
Having this strong tool in hand, 
we may then characterise admissible 
rules\footnote{An inference rule $A/B$ is admissible if every unifier of $A$ is also 
a unifier of $B$.}
\cite{Iemhoff-admissibility,jevrabek2005admissible,jevrabek2010admissible,cintula2010admissible,GOUDSMIT2014652,VANDERGIESSEN2023103233} for a logic. 
The situation for classical logic is simple: Every satisfiable formula is also projective. Hence no approximation is required and also no non-derivable admissible rule exists ($A/B$ is admissible iff $A\to B$ is derivable).

Additionally, we may consider propositional language with two sorts of
atomics: variables $\varr$ and parameters (constants) $\parr$. Variables are 
intended to be substituted, while parameters are considered as constants and 
are never substituted. Again  one may wonder about the previous  questions of 
unification \cite{balbiani2021unification} and admissibility 
\cite{JERABEK2015881,jevrabek2020rules} for this two-sorted language.
But this time, even for classical logic, not all satisfiable formulas are 
projective. For example the  formula $A:=p\wedge x$ for atomic variable $x$ 
and parameter $p$  is satisfiable, while it is not unifiable. This will rule 
out $A$ being projective
only because of it not being unifiable. 
To mend this, we define the notion of $E$-projectivity by 
replacing condition 
$\vdash \theta(A)$ by $\vdash \theta(A)\lr E$ in above definition. 
This time in our example, 
the formula $A$ is $p$-projective. 
In this paper, we first observe that  
$A$ can only be $E$-projective for a unique $E$ 
(\Cref{par-projection-uniqueness}), which is annotated as $\parpro A$. 
Then we show 
(\Cref{classical-proj-all-2}) that all $A$'s are 
$\parpro A$-projective in classical logic. 
For intuitionistic logic, we find a Kripke 
semantical characterisation for those  $A$'s 
that are $\parpro A$-projective (\Cref{par-proj-par-extendable}). 

On the other hand, we have the well-studied notion of the 
\textit{uniform post-interpolant}. Given a formula $A$, the uniform 
post-interpolant of $A$ with respect to $\parr$, is the strongest formula 
$B$ in the variable-free language 
such that $A\to B$ holds.  
Of course, it might be the case 
that such a \textit{strongest} formula does not exist
at all. Fortunately, several logics
indeed have uniform post-interpolants, namely 
intuitionistic logic \cite{Visser_2017} and
all locally tabular (finite) logics like 
classical logic. Interestingly, it turns out 
(\Cref{par-proj-interpol}) that $\parpro A$ 
is equal to the uniform post-interpolant of $A$.
As an application, we show that 
extensions of intuitionistic logic with variable-free
formulas have a finitary unification type. More precisely, 
in the mentioned logics, we show that every unifiable formula has a finite complete set 
of unifiers, i.e. a finite set of unifiers such that 
every unifier is less general than at least one 
of them. In the end, we observe that 
admissibility relative to the set of all 
variable-free formulas is trivial: 
$A\adms{\parr}{} B$\footnote{$A\adms{\parr}{} B$ 
is defined as follows: for every variable-free $E$
and substitution $\theta$
such that $\vdash E\to \theta(A)$ we have 
$\vdash E\to\theta(B)$.}
iff 
$\vdash A\to B$ for the intuitionistic logic.

Motivations for the study of relativised projectivity 
and admissibility come from their applications in 
the study of the provability logic of $\HA$\footnote{Heyting Arithmetic}
\cite{PLHA}. More precisely, 
the first author, studied \cite{PLHA0} projectivity
and admissibility
relative to the set $\NNIL$\footnote{$\NNIL$ 
is the set of formulas with No Nested Implications 
on the Left \cite{Visser-Benthem-NNIL}.} and used 
it in a  crucial way \cite{PLHA} to axiomatize 
the provability logic of $\HA$.
Interestingly, 
\cite{balbiani2023projective,baader2010unification} have also found 
relative unification beneficiary for their study.  Nevertheless,  they
consider the non-parametric language.  
The main aim of this paper is to look at this relativised notion 
of projectivity and admissibility, in some more general viewpoint. 
In this direction, \Cref{par-proj-approx-general} generalises the previous result in \cite{PLHA0} 
from $\NNIL$ to all finite sets of formulas $\Gamma$.
However, it appears that there is no straightforward generalisation for 
the characterisation of admissibility relative to $\NNIL$. 

\section{Definitions and basic facts}\label[section]{sec-definition}

\subsubsection*{Language}
In the sequel, we assume that $\parr$ and $\varr$ are two sets of 
parameters and variables, respectively. Unless otherwise stated, we assume that 
both sets $\varr$ and $\parr$ are infinite.
Then the propositional language $\lcalz$, is the set of all 
Boolean combinations of atomic 
propositions $\atom:=\varr\cup\parr$. 
Boolean connectives are $\bot$,  $\wedge$, $\vee$ and $\to$. 
Note that all connectives are binary, except for falsity 
$\bot$ which is nullary, i.e.~an operator without argument. 
Then $\top$, $\neg A$ and $A\lr B$ are shorthand for $\bot\to\bot$, $A\to \bot$
and $(A\to B)\wedge(B\to A)$, respectively.
Sets of formulas are indicated by 
$\Gamma$, $\Delta$, $\Pi$, $X$ and $Y$. 
We use $x,y,\ldots$ and $p,q,\ldots$ and $a,b , \ldots$ and 
$A, B, C, D, E,F,\ldots$ as meta-variables for variables, parameters,
and formulas, respectively. Optionally, we may also use subscripts for them.
Let $\lcalz(X)$ indicate all Boolean combinations of propositions in $X$. 
Hence, for example, $\lcalz(\varr\cup\parr)$ is equal to $\lcalz$. Moreover, $\Gpar$ indicates the set $\Gamma\cap\lcalzpar$.
We use $\Gamma\vdashi A$ to indicate the intuitionistic derivability of $A$ from the set $\Gamma$. We also use $\Gamma\vdashc A$ for the classical derivability. 
Whenever we state a definition for both intuitionistic and classical cases, we simply use $\vdash$ without superscript.

A \textit{$\parr$-extension} of a logic is an extension of that logic by 
adding some $E\in\lcalzpar$ to its set of axioms. Note that this additional axiom is 
not considered as a schema. 


\subsubsection*{Kripke models}
A Kripke model is a tuple $\kcal=(W,\pce,\V)$, whose frame $(W,\pce)$,  
is a reflexive transitive ordering. 
We write $w\prec u$ to $w\pce u$ and $w\neq u$. 
Moreover, we use $\sce$ and $\succ$ as inverses of the relations $\pce$ and $\prec$, respectively.
A root of $\kcal$, is the minimum element in $(W,\pce)$ if it exists, and we say that $\kcal$ is rooted if it has a root.
We say that $\kcal$ is finite if $W$ is so.
In this paper, it is assumed that all Kripke models are 
\textit{finite} and \textit{rooted}.
We use the notation $\kcal,w\Vdash A$ for the validity / forcing of $A$ at the node $w$ in the model $\kcal$.
$\kcal\Vdash A$ is defined as $\forall\,w\in W\ \big(\kcal,w\Vdash A\big)$.
Given $w\in W$, we define $\gen \kcal w$ as the restriction of $\kcal$ to all nodes that are accessible from $w$.
Furthermore, $\Val{\kcal}{w}$ indicates the valuation of $\kcal$ at $w$, i.e.~the set of all atomics $a$ such that 
$w\V a$.

\subsubsection*{Substitutions}
\textit{Substitutions} are functions $\theta:\lcalz\longrightarrow\lcalz$ with the following properties: (they are indicated by lowercase Greek letters $\theta,\tau,\lambda ,\gamma,\ldots$)
\begin{itemize}
    \item $\theta$ commutes  with connectives. This means that for binary connective $\circ$ we have $\theta(A\circ B)=\theta(A)\circ\theta(B)$ and for $\bot$ we have $\theta(\bot)=\bot$.
    \item $\theta(p)=p$ for every $p\in\parr$.
\end{itemize}
Given two substitutions $\theta$ and $\gamma$, we define $\theta\gamma$ as the composition of $\theta$ and $\gamma$: $\theta\gamma(a):=\theta(\gamma(a))$.
Given a 
substitution $\theta$ and the Kripke model $\kcal=(W,\pce,\V)$, define
$\theta^*(\kcal)$ as a Kripke model with the same frame of $\kcal$, and with  valuation $\V'$ as follows:
$w\V' a$ iff $\kcal,w\Vdash\theta(a)$.  Then it can be easily observed that 
$\kcal,w\Vdash \theta(A)$ iff $\theta^*(\kcal),w\Vdash A$, for every $w\in W$ and $A\in\lcalz$.
If no confusion is likely,\footnote{Confusions are due to this fact that 
$(\theta\gamma)^*=\gamma^*\theta^*$.} we may simply use $\theta(\kcal)$ instead of $\theta^*(\kcal)$.

Given two substitutions $\theta$ and $\gamma$, we say that $\theta$ is more general than 
$\gamma$ ($\gamma$ less general than $\theta$), annotated as $\gamma\leq \theta$, 
if there is a substitution $\lambda$ such that $\vdash \gamma(x)\lr \lambda\theta(x)$.

\subsubsection*{Ralative projectivity}
An \textit{$A$-identity} is some $\theta$ such that 
\begin{equation}\label{eq1}
    \forall\, a\in\atom \ \  A\vdash \theta(a)\lr a.
\end{equation}

Let $E$ be a proposition in the parametric language $\lcalzpar$.
An $E$-fier of $A$ is some $\theta$  such that $\vdash \theta(A)\lr E$.  
An $E$-fier for some $E\in\Gpar:=\Gamma\cap\lcalzpar$ is also called a $\Gamma$-fier. Note that here 
$\Gamma$ is not necessarily a subset of $\lcalzpar$.
$\top$-fier and $\lcalz$-fier are also called \textit{unifier} and \textit{parametrifier}, respectively.
We say that $A$ is \textit{$E$-projective} ($\Gamma$-projective), 
 if there is some $\theta$ which is $A$-identity 
 and $E$-fier ($\Gamma$-fier) of $A$.
 In this case we say that $\theta$ is  a projective 
 $E$-fier ($\Gamma$-fier) of $A$ and $E$ is \textit{the} projection of $A$. 
As we will see in \Cref{par-projection-uniqueness}, the projection is unique, modulo provable equivalence. 
 So we use the notation $\parpro A$ for its unique projection, if it exists. 
Then $\lcalz$-projectivity is also called $\parr$-projectivity.
The well-known notion of \textit{projectivity}, as introduced by S.~Ghilardi \cite{Ghilardi97,Ghil99}, 
coincided with $\top$-projectivity in this general setting.

Given $A\in\lcalz$ and a set $\Theta$ of substitutions, 
we say that $\Theta$ is a complete set of unifiers of $A$, if the following holds:
(1) every $\theta\in\Theta$ is a unifier of $A$, (2) every unifier of $A$ is less general 
than some $\theta\in\Theta$.
We say that the unification type of a logic is:
\begin{itemize}
\item unitary, if every unifiable formula of the 
logic, has a singleton complete set of unifiers,
\item finitary, if every unifiable formula of the 
logic, has a finite complete set of unifiers.
\end{itemize} 
Note that in our definition, if the unification type of a logic is unitary, it is also finitary.

All the above definitions of projectivity rely on a \textit{background logic} in the context. 
More precisely, we have two different notions of $\Gamma$-projectivity: one for classical logic and one for intuitionistic logic. 
To simplify the notation, we always hide the dependency on the logic. 
This will not cause much confusion, since it is quite clear from the context which logic we are talking about. 
To be more precise, in this section, we provide all statements for both classical and intuitionistic logic.
Later in \cref{sec-class}, classical logic is our background logic, whereas
for the rest of the paper, the background logic is intuitionistic logic. 

 \begin{lemma}[\textbf{Uniqueness of projections}]\label[lemma]{par-projection-uniqueness}
Projection is unique, modulo provable equivalence.
 \end{lemma}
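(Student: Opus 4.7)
My plan is to give a direct syntactic argument that works uniformly for both the classical and intuitionistic cases, exploiting the fact that a projective $A$-identity behaves like the identity on all formulas in the presence of $A$, so two candidate projections collapse to provably equivalent formulas after a short manipulation.

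The preliminary step I would isolate first is an extension of the $A$-identity condition from atomics to arbitrary formulas: if $\theta$ satisfies $A\vdash\theta(a)\lr a$ for every $a\in\atom$, then $A\vdash\theta(B)\lr B$ for every $B\in\lcalz$. This goes by a routine induction on $B$, using the atomic case as the base and the fact that substitutions commute with the connectives for the induction step.

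Now let $\theta$ be a projective $E$-fier of $A$ and $\theta'$ a projective $E'$-fier of $A$. Instantiating the extension lemma at $\theta'$ with $B=A$ gives $A\vdash\theta'(A)$. Since $\vdash$ is closed under substitution, applying $\theta$ to this yields $\theta(A)\vdash\theta\theta'(A)$. On the other hand, applying $\theta$ to the equivalence $\vdash\theta'(A)\lr E'$ and using that $\theta$ fixes parameters, so $\theta(E')=E'$ because $E'\in\lcalzpar$, gives $\vdash\theta\theta'(A)\lr E'$. Chaining these with $\vdash E\lr\theta(A)$ produces $\vdash E\to E'$. Swapping the roles of $\theta$ and $\theta'$ yields the converse implication, and therefore $\vdash E\lr E'$, as required.

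I do not anticipate any real obstacle: the argument is essentially mechanical once the identity-extension lemma is in place. The two facts that need to be invoked carefully are that $\vdash$ is preserved by substitution (used to push an $A$-consequence under $\theta$) and that $\theta$ acts as the identity on $\lcalzpar$-formulas (used to obtain $\theta(E')=E'$); both follow directly from the definition of substitution given earlier in the paper.
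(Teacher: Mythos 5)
Your proof is correct and follows essentially the same route as the paper's: both arguments extend the $A$-identity condition to all formulas to obtain $A\vdash\theta'(A)$, push the resulting implication through the other substitution using closure of $\vdash$ under substitution, and exploit that substitutions act as the identity on $\lcalzpar$ to conclude $\vdash E\lr E'$. The only difference is cosmetic: the paper first collapses to $\vdash A\to E'$ before substituting, whereas you substitute into the two premises separately and then chain.
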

\begin{proof}
    Let $\vdash \theta_i(A)\lr B_i$ and $A\vdash \theta_i(x)\lr x$ and $B_i\in\lcalzpar$ for $i=1,2$ and every $x\in\varr$. 
    Then $A\vdash \theta_1(A)\lr A$ and thus 
    $\vdash A\to B_1$. Therefore, we have 
    $\vdash \theta_2(A\to B_1)$ and since 
    $\theta_2$ is identity 
    over the language $\lcalz(\parr)$, 
    we may deduce $\vdash \theta_2(A)\to B_1$, 
    and thus $\vdash B_2\to B_1$. By a similar 
    argument we may prove  $\vdash B_1\to B_2$, 
    and thus $\vdash B_1\lr B_2$.
\end{proof}

%
%
%

\subsubsection*{Uniform post-interpolant and upward approximations}
Given a formula $A$, we say that $B$ is its uniform post-interpolant with respect to $\parr$, 
if we have (1) $B\in\lcalzpar$, (2) $\vdash A\to B$ and (3) for every $C\in\lcalzpar$ s.t.~$\vdash A\to C$ we have $\vdash B\to C$. 
It can be easily proved that such a $B$ is unique (modulo provable equivalence), if it exists. 
Hence, we use the notation $\uap{A} \parr$ for the uniform post-interpolant of $A$ with respect to $\parr$.
It is a well-known fact that the uniform post-interpolant for classical and intuitionistic logic exists. 
In the following theorem, we will show that the unique projection is equal to the 
uniform post-interpolant with respect to the set $\parr$ of atomics. 
\begin{theorem}\label[theorem]{par-proj-interpol}
For every $\parr$-projective $A$, we have $\parpro A =\uap A \parr$. 
\end{theorem}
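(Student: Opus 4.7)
The plan is to verify the two defining conditions of the uniform post-interpolant for the candidate formula $\parpro A$, namely that $\vdash A \to \parpro A$ and that any $C \in \lcalzpar$ with $\vdash A \to C$ satisfies $\vdash \parpro A \to C$. By \Cref{par-projection-uniqueness} the projection $\parpro A$ is uniquely determined up to provable equivalence, so once these two properties are verified, equality with $\uap A \parr$ follows from the uniqueness of the uniform post-interpolant.

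Fix a projective $\parr$-fier $\theta$ of $A$, so that $\vdash \theta(A) \lr \parpro A$ and $A \vdash \theta(a) \lr a$ for every atomic $a$. For the first condition, I would first lift the $A$-identity property from atomics to arbitrary formulas by a straightforward induction on complexity, obtaining $A \vdash \theta(B) \lr B$ for all $B$, and specialising at $B := A$ to get $A \vdash \theta(A)$. Combined with $\vdash \theta(A) \to \parpro A$, this yields $\vdash A \to \parpro A$ as required.

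For the second condition, suppose $C \in \lcalzpar$ satisfies $\vdash A \to C$. Applying $\theta$ to both sides gives $\vdash \theta(A) \to \theta(C)$. Since $\theta$ fixes every parameter and $C$ is built entirely from parameters and $\bot$, the substitution acts as the identity on $C$, so $\theta(C) = C$. Hence $\vdash \theta(A) \to C$, and using $\vdash \parpro A \to \theta(A)$ we conclude $\vdash \parpro A \to C$.

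There is no real obstacle here; the argument is essentially bookkeeping once one observes that the $A$-identity hypothesis propagates through connectives by induction and that $\theta$ leaves parametric formulas untouched. The only subtlety worth flagging is that the argument works uniformly for both the classical and the intuitionistic background logic, because the induction on complexity and the invariance of $\lcalzpar$ under $\theta$ do not depend on the choice of logic.
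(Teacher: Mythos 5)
Your proof is correct and follows essentially the same route as the paper: it verifies the three defining conditions of the uniform post-interpolant for $\parpro A$, deriving $\vdash A\to\parpro A$ by lifting the $A$-identity property of $\theta$ to $A$ itself (the same argument the paper imports from the proof of \Cref{par-projection-uniqueness}), and handling the minimality condition via $\theta(C)=C$ for $C\in\lcalzpar$ exactly as the paper does. No issues to flag.
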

\begin{proof}
   Let  $A\vdash \theta(x)\lr x$ for every $x\in\varr$ and $\vdash\theta(A)\lr \parpro A$ and 
   $\parpro A \in\lcalzpar$.
   We will check that all 3 required conditions for it to be a uniform post-interpolant hold. 
(1) Trivially, we have $\parpro A\in\lcalzpar$. 
(2)  By the argument provided in the proof of \Cref{par-projection-uniqueness}, 
we have $\vdash A\to \parpro A$. (3)  Let $C\in\lcalzpar$ s.t.~$\vdash A\to C$. 
Hence $\vdash \theta(A)\to \theta(C)$ and since $C\in\lcalz(\parr)$ we get $\theta(C)=C$. 
Thus, $\vdash \parpro A\to C$.
\end{proof}

Given $A\in\lcalz$, one might wonder if $\parr$-projectivity  of $A$ could be defined using the standard notion of
projectivity. We will prove that $\parr$-projectivity of $A$ is equivalent to projectivity of $A\lr \uap A\parr$, for both classical and intuitionistic logic.
One direction is easy and stated in the following lemma. However, for the other direction,
we provide separate proofs for intuitionistic and classical logic. For intuitionistic logic,
we take advantage of the semantic characterisation of $\parr$-projectivity (see \Cref{lem-par-proj-proj}). 
For classical logic, \Cref{c1} and \Cref{c2} imply that $A\lr \uap A\parr$ is projective for every $A$!
\begin{lemma}\label[lemma]{lem-proj-par-proj}
    Every projective unifier of $A\lr \uap A\parr$ is also a projective 
    parametrifier of $A$. Therefore,
    projectivity of $A\lr \uap A\parr$ implies $\parr$-projectivity of $A$.
\end{lemma}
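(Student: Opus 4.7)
The plan is to unpack the two definitions and show that being a projective unifier of $A \lr \uap A \parr$ gives us exactly the two conditions needed for a projective parametrifier of $A$, where the parameter $E$ in the parametrifier definition will turn out to be $\uap A \parr$ itself.

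First I would fix a projective unifier $\theta$ of $A \lr \uap A \parr$. Unfolding the definition, this gives $\vdash \theta(A \lr \uap A \parr)$ together with $(A \lr \uap A \parr) \vdash \theta(a) \lr a$ for every $a \in \atom$. Since $\uap A \parr \in \lcalzpar$ and substitutions are the identity on parameters, we have $\theta(\uap A \parr) = \uap A \parr$, and therefore $\vdash \theta(A) \lr \uap A \parr$. This already shows $\theta$ is a $\uap A \parr$-fier of $A$, and $\uap A \parr \in \lcalzpar$, so it is also a parametrifier.

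The main step is to upgrade the $(A \lr \uap A \parr)$-identity property to an $A$-identity property. For this I would use the crucial (but easy) observation that $A \vdash A \lr \uap A \parr$: indeed, by the definition of the uniform post-interpolant we have $\vdash A \to \uap A \parr$, so under the hypothesis $A$ both $A$ and $\uap A \parr$ hold, making the biconditional derivable. Combining this with $(A \lr \uap A \parr) \vdash \theta(a) \lr a$ and the cut rule yields $A \vdash \theta(a) \lr a$ for every $a \in \atom$, which is the required $A$-identity condition.

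Putting the two observations together, $\theta$ is a projective $\uap A \parr$-fier of $A$, so $A$ is $\uap A \parr$-projective and a fortiori $\parr$-projective, establishing the second assertion. I do not expect any real obstacle here; the only subtlety is ensuring the direction of the inference $A \vdash A \lr \uap A\parr$, which is where the defining property of $\uap A \parr$ as a post-interpolant enters. This step works identically for classical and intuitionistic logic since in both cases the uniform post-interpolant exists and satisfies $\vdash A \to \uap A \parr$.
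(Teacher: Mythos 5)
Your proof is correct and follows essentially the same route as the paper's: both extract $\vdash\theta(A)\lr\uap A\parr$ from the fact that $\theta$ fixes parameters, and both use $\vdash A\to\uap A\parr$ to weaken the hypothesis $A\lr\uap A\parr$ down to $A$ in the identity condition (the paper passes through the intermediate $\uap A\parr\to A\vdash\theta(x)\lr x$, you derive $A\vdash A\lr\uap A\parr$ directly, which is the same observation). No gaps.
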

\begin{proof}
    Let  $\theta$ be a projective unifier of $A\lr \uap A\parr$.
    Hence $\vdash\theta(A\lr \uap A\parr)$ and $\vdash \theta(A)\lr  \uap A\parr$. 
    This shows that $\theta$ is a parametrifier of $A$.
    Also, for every variable $x$ we have $A\lr \uap A\parr\vdash \theta(x)\lr x$.  
    Since $\vdash  A\to \uap A\parr$, we have $\uap A\parr\to A\vdash \theta(x)\lr x$.  
    Hence, a fortiori, we have $ A\vdash \theta(x)\lr x$. 
    Thus, $\theta$ is a projective parametrifier of $A$.  
\end{proof}

\begin{theorem}\label[theorem]{mgparametrifier}
Every projective unifier of $A\lr \uap A\parr$ is a most general 
$\uap A\parr$-fier of $A$.
\end{theorem}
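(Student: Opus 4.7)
Fix a projective unifier $\theta$ of $A \lr \uap A \parr$. By \Cref{lem-proj-par-proj}, $\theta$ is already a projective parametrifier of $A$; in particular (via \Cref{par-projection-uniqueness} and \Cref{par-proj-interpol}), $\vdash \theta(A) \lr \uap A \parr$, so $\theta$ is a $\uap A \parr$-fier of $A$. The task is therefore to show that any other $\uap A \parr$-fier $\gamma$ of $A$ satisfies $\gamma \leq \theta$.

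The plan is to take the witnessing substitution to be $\lambda := \gamma$ itself, and to show $\vdash \gamma(x) \lr \gamma\theta(x)$ for every variable $x$. Write $E := \uap A \parr$ for brevity. Since $\vdash A \to E$, the hypothesis $A \lr E \vdash \theta(x) \lr x$ (which expresses that $\theta$ is an $(A\lr E)$-identity) simplifies to the weaker side only:
\[
 E \to A \;\vdash\; \theta(x) \lr x.
\]
Applying the substitution $\gamma$ to both sides of this entailment, and using that $\gamma$ is the identity on parameters (so that $\gamma(E) = E$), we obtain
\[
 E \to \gamma(A) \;\vdash\; \gamma\theta(x) \lr \gamma(x).
\]

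At this point the key observation kicks in: because $\gamma$ is a $\uap A \parr$-fier of $A$, we have $\vdash \gamma(A) \lr E$, and in particular $\vdash E \to \gamma(A)$. Discharging the hypothesis then gives $\vdash \gamma\theta(x) \lr \gamma(x)$, which is exactly $\gamma \leq \theta$ with $\lambda := \gamma$. This argument is uniform for both the classical and intuitionistic background logic, since it only uses propositional reasoning together with the fact that substitutions fix parameters.

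The only potentially delicate step is the simplification of the $A$-identity condition to $E \to A \vdash \theta(x) \lr x$; this is where $\vdash A \to E$ (guaranteed by $E$ being the uniform post-interpolant of $A$) is used crucially, since it allows the troublesome conjunct of $A \lr E$ to disappear upon substitution by $\gamma$. Everything else is bookkeeping.
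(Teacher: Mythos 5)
Your proposal is correct and follows essentially the same route as the paper: the paper notes that any $\uap A\parr$-fier $\gamma$ of $A$ is a unifier of $A\lr\uap A\parr$ and then invokes the standard fact that projective unifiers are most general unifiers, whereas you simply unfold that fact in place, exhibiting the explicit witness $\lambda:=\gamma$ via $E\to A\vdash\theta(x)\lr x$ and $\vdash E\to\gamma(A)$. The extra simplification using $\vdash A\to\uap A\parr$ is sound but not essential; both arguments are the same computation.
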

\begin{proof}
Let $\theta$ be a projective unifier of $A\lr \uap A\parr$. 
It is obvious that $\theta$ is also a $\uap A\parr$-fier of $A$.
To show that it is a most general $\uap A\parr$-fier of $A$,
let $\gamma$ be some $\uap A\parr$-fier of $A$. 
Hence $\gamma$ is also a unifier of $A\lr \uap A\parr$. Since 
projective unifiers are most general unifiers, we may infer that 
$\gamma$ is less general than $\theta$.
\end{proof}
\section{\texorpdfstring{$\lcalz$}{L}-Projectivity for Classical Logic}\label{sec-class}
In this section we will prove that in Classical Logic, every $A\in\lcalz$ is $\parr$-projective. First, let us see some definitions. 

We say that an atomic $x$ is positive (negative) in $A$ if 
for all interpretations $I$ and $J$ such that $I(x)\leq J(x)$ and 
$I(a)=J(a)$ elsewhere,
we have $I(A)\leq J(A)$ ($I(A)\geq J(A)$). 
We use the order $\leq$ on truth-falsity 
values so that falsity is less than truth. 
\begin{lemma}\label[lemma]{posit-eqiv}
A variable $x$ is positive in $A$ iff $\vdashc A\to A[x:\top]$.    
\end{lemma}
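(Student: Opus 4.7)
The plan is to prove both directions by simple semantic arguments, exploiting the fact that $A[x:\top]$ is variable-free in $x$.

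For the forward direction ($\Rightarrow$), I would assume $x$ is positive in $A$ and show $\vdashc A\to A[x:\top]$ by checking every classical interpretation $I$ with $I(A)=\top$. Given such an $I$, define $J$ by $J(x):=\top$ and $J(a):=I(a)$ for $a\neq x$. Since $I(x)\leq J(x)$ and the interpretations agree elsewhere, positivity gives $I(A)\leq J(A)$, hence $J(A)=\top$. Since $J(x)=\top$, the substitution lemma yields $J(A)=J(A[x:\top])$, and since $A[x:\top]$ does not contain $x$ while $I$ and $J$ agree on all other atomics, $J(A[x:\top])=I(A[x:\top])$. Hence $I(A[x:\top])=\top$, as required.

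For the backward direction ($\Leftarrow$), I would take any $I,J$ with $I(x)\leq J(x)$ and agreeing elsewhere, and argue $I(A)\leq J(A)$. The non-trivial case is $I(x)=\bot$ and $J(x)=\top$; assume $I(A)=\top$. By the hypothesis $\vdashc A\to A[x:\top]$ we obtain $I(A[x:\top])=\top$. Because $A[x:\top]$ contains no occurrence of $x$ and $I,J$ agree on every other atomic, $I(A[x:\top])=J(A[x:\top])$. Finally, since $J(x)=\top$, one more application of the substitution lemma yields $J(A[x:\top])=J(A)$, giving $J(A)=\top$.

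There is no real obstacle here: the argument is just a routine double application of the standard fact that if $J(x)=\top$ then the truth value of $A$ at $J$ coincides with that of $A[x:\top]$, combined with the fact that substituting $\top$ for $x$ eliminates dependence on $x$. I would present both directions together in a short proof.
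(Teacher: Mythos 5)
Your proof is correct and follows essentially the same route as the paper's: both directions are handled by comparing an interpretation $I$ with its modification $J$ sending $x$ to $\top$, using positivity one way and the hypothesis $\vdashc A\to A[x:\top]$ the other. You merely spell out explicitly the two substitution facts ($J(A)=J(A[x:\top])$ when $J(x)=\top$, and independence of $A[x:\top]$ from $x$) that the paper compresses into a single step.
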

\begin{proof}
    Left-to-right: Let $x$ be positive in $A$ and $I\models A$
    seeking to show that $I\models A[x:\top]$.
    Let $J$ be the interpretation that is always equal to $I$ except for $J(x):=\top$.  Then obviously $I(x)\leq J(x)$ and thus 
    $I(A)\leq J(A)$. Since $I\models A$ we get $J\models A$ and thus
    $I\models A[x:\top]$.\\
    Right-to-left: Let $\vdashc A\to A[x:\top]$, 
    $I(x)< J(x)$ (note that this implies $J\models x$) and $I(a)=J(a)$ elsewhere. 
    Assume $I\models A$ to show $J\models A$. 
    Since $I\models A\to A[x:\top]$ we get $I\models A[x:\top]$, which is just $J\models A$.
\end{proof}

Given $A\in\lcalz$ and some $x\in \varr$, define the substitution  $\theta^A_x$ as follows:
$$\theta^A_x(y):=\begin{cases}
y \quad &: y\neq x\\
A\wedge y &: y=x
\end{cases}$$

\begin{lemma}\label[lemma]{lem-posit} 
 Given $A$, 
   the variable $x$ is positive in $\theta^A_x(A)$.
   Furthermore, if $y$ is positive in $A$, then it is also positive 
   in $\theta^A_x(A)$.
\end{lemma}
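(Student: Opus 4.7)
The plan is to use \Cref{posit-eqiv} to reduce both claims to provable implications in classical logic: the first claim to $\vdashc \theta^A_x(A) \to \theta^A_x(A)[x:\top]$, and the second to $\vdashc \theta^A_x(A) \to \theta^A_x(A)[y:\top]$ (under the hypothesis that $y$ is positive in $A$). The cleanest route is to first establish the classical equivalence
\begin{equation*}
\vdashc \theta^A_x(A) \lr A \vee A[x:\bot].
\end{equation*}
This is verified semantically: for any interpretation $I$, unfolding the substitution gives $I(\theta^A_x(A)) = \hat{I}(A)$, where $\hat{I}$ agrees with $I$ off $x$ and $\hat{I}(x) = I(A) \wedge I(x)$. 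A case split on $I(A)$ then shows $I(\theta^A_x(A)) = \top$ iff $I(A) = \top$ or $I(A[x:\bot]) = \top$: if $I(A) = \top$ then $\hat{I} = I$, so $\hat{I}(A) = \top$; if $I(A) = \bot$ then $\hat{I}(x) = \bot$, so $\hat{I}(A) = I(A[x:\bot])$.

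With this equivalence in hand, both positivity claims become elementary. For the first, since $x$ does not occur in $A[x:\bot]$, it suffices to show $\vdashc A \vee A[x:\bot] \to A[x:\top] \vee A[x:\bot]$, which reduces to the tautology $\vdashc A \to A[x:\top] \vee A[x:\bot]$ obtained by a classical case split on the value of $x$. For the second, assume $y$ is positive in $A$. A direct check shows that $y$ is then also positive in $A[x:\bot]$: substituting a constant for another variable does not affect monotonicity in $y$, so any two interpretations differing only in $y$ witness positivity for $A[x:\bot]$ exactly as they do for $A$. Applying \Cref{posit-eqiv} to each disjunct gives $\vdashc A \to A[y:\top]$ and $\vdashc A[x:\bot] \to A[x:\bot][y:\top]$; taking disjunctions yields $\vdashc A \vee A[x:\bot] \to (A \vee A[x:\bot])[y:\top]$, hence $y$ is positive in $\theta^A_x(A)$.

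No deep argument is required; the main bookkeeping task is the semantic verification of the decomposition $\theta^A_x(A) \lr A \vee A[x:\bot]$, from which the two positivity statements fall out by simple manipulations of disjuncts. The only mildly subtle auxiliary point I expect to flag is the preservation of positivity of $y$ in $A$ upon substituting $\bot$ for $x$; this follows immediately from the semantic definition but is worth recording explicitly.
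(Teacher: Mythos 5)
Your proof is correct, but it takes a genuinely different route from the paper's. The paper argues directly from the semantic definition of positivity: it fixes two interpretations $I\not\models x$, $J\models x$ (respectively, differing only at $y$) that agree elsewhere and runs a case analysis on the values $I(A)$ and $J(A)$ to compare $I(\theta^A_x(A))$ with $J(\theta^A_x(A))$. You instead isolate the Boolean-expansion identity $\vdashc \theta^A_x(A)\lr A\vee A[x:\bot]$ and then derive both positivity claims by manipulating the disjuncts, using \Cref{posit-eqiv} to convert positivity into provable implications. Your decomposition buys a more modular and, in one respect, more careful argument: the first claim reduces to the tautology $A\to A[x:\top]\vee A[x:\bot]$, and the second to the explicitly recorded observation that positivity of $y$ in $A$ is inherited by $A[x:\bot]$. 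By contrast, the paper's direct case analysis is shorter to set up but compresses the case $I(A)=J(A)$ of the second claim to ``obviously $I(\theta^A_x(A))=J(\theta^A_x(A))$''; that equality can actually fail (e.g.\ $A=\neg x\wedge y$ with $I(x)=J(x)=\top$), and what really holds there is the inequality $I(\theta^A_x(A))\leq J(\theta^A_x(A))$, obtained precisely from positivity of $y$ in $A[x:\bot]$ --- the step your argument makes explicit. Two small points you should record when writing this up: the transfer of positivity across the provable equivalence is licensed because positivity is a purely truth-functional property (or, syntactically, because $\vdashc B\lr C$ yields $\vdashc B[x:\top]\lr C[x:\top]$); and in the second claim you tacitly assume $y\neq x$, the case $y=x$ being subsumed by the first claim (or handled by noting that $y$ does not occur in $A[x:\bot]$, so it is vacuously positive there).
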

\begin{proof}
 Let $I\not\models x$ and $J\models x$ and $I(a)=J(a)$ 
 for every atomic $a\neq x$. 
 Moreover, assume that $I\models \theta^A_x(A)$, 
 seeking to show $J\models \theta^A_x(A)$. If $I(A)=J(A)$, then 
 obviously $I(\theta^A_x(A))=J(\theta^A_x(A))$ and we are done.
 Hence, we have the following cases:
 \begin{itemize}
     \item $I\models A$ and $J\not\models A$. Then we have 
     $I(\theta^A_x(x))=J(\theta^A_x(x))$ and thus 
     $I(\theta^A_x(A))=J(\theta^A_x(A))$. 
     \item $I\not\models A$ and $J\models A$.
     Then we have $J(\theta^A_x(x))=J(x)$ and thus 
     $J(\theta^A_x(A))=J(A)$. This means that $J\models \theta^A_x(A)$.
 \end{itemize}
 At the end, we must show that if $y$ is positive in $A$, then it is 
 also positive in $\theta^A_x(A)$. 
  Let $I\not\models y$ and $J\models y$ and $I(a)=J(a)$ 
 for every atomic $a\neq y$. 
 Moreover, assume that $I\models \theta^A_x(A)$, 
 seeking to show $J\models \theta^A_x(A)$.
 If $I(A)=J(A)$, then 
 obviously $I(\theta^A_x(A))=J(\theta^A_x(A))$ and we are done.
 So we have the following cases:
 \begin{itemize}
     \item $I\models A$ and $J\not\models A$.
     This case contradicts the positiveness of $y$ in $A$.
     \item $I\not\models A$ and $J\models A$.
     Then we have $J(\theta^A_x(x))=J(x)$ and thus 
     $J(\theta^A_x(A))=J(A)$. This means that $J\models \theta^A_x(A)$.
 \end{itemize}
\end{proof}

\begin{lemma}\label[lemma]{c1}
The following items are equivalent:
\begin{enumerate}
    \item $A$ is unifiable.
    \item $A$ is projective.    
    \item $\uap A \parr=\top$.
\end{enumerate}
\end{lemma}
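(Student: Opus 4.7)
The plan is to prove the cycle $(2) \Rightarrow (1) \Rightarrow (3) \Rightarrow (1) \Rightarrow (2)$. Two of the links are immediate: a projective $\top$-fier of $A$ is in particular a unifier, which gives $(2) \Rightarrow (1)$; and if $\sigma$ unifies $A$, then for every $E \in \lcalzpar$ with $\vdash A \to E$ we obtain $\vdash \sigma(A) \to \sigma(E) = E$ (since $\sigma$ fixes parameters), whence $\vdash E$, so the strongest parametric consequence $\uap A \parr$ must be $\top$, settling $(1) \Rightarrow (3)$.

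For $(3) \Rightarrow (1)$, I would exploit the well-known fact that in classical propositional logic $\uap A \parr$ coincides with the ``propositional existential quantification'' of $A$ over its variables. Thus $\uap A \parr = \top$ means that for every truth assignment $f$ to the finitely many parameters of $A$ there is a variable assignment $g_f$ with $A$ true under $(f, g_f)$. Using the characteristic parametric conjunctions $\chi_f := \bigwedge_{f(p)=1} p \wedge \bigwedge_{f(p)=0} \neg p$, set $\sigma(x_i) := \bigvee\{\chi_f : g_f(x_i) = 1\}$ for each variable $x_i$ occurring in $A$ (and $\sigma(x) := \top$ for the remaining variables). Under any valuation $v$ only the $\chi_f$ with $f = v|_{\parr}$ is true, so $\sigma(x_i)$ evaluates to $g_f(x_i)$ and $\sigma(A)$ to the (true) value of $A$ under $(f, g_f)$, giving $\vdash \sigma(A)$.

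Finally, for $(1) \Rightarrow (2)$ I would use the classical conditional-substitution trick: given a unifier $\sigma$, define $\theta(x) := (A \wedge x) \vee (\neg A \wedge \sigma(x))$ on variables, with $\theta(p) := p$ on parameters. On any valuation $v$ with $v \models A$ the substitution $\theta$ collapses to the identity on atomics, yielding $A \vdash \theta(a) \lr a$; on $v \not\models A$ it collapses to $\sigma$ on variables, so $v(\theta(A)) = v(\sigma(A)) = 1$. Hence $\theta$ is simultaneously an $A$-identity and a unifier, i.e.\ a projective unifier. The main obstacle will be $(3) \Rightarrow (1)$: it relies on invoking the semantic content of $\uap A \parr$ as existential projection over variables of $A$; once that is in hand, both the Skolem-style definition of $\sigma$ and the conditional-substitution trick are essentially calculation-free.
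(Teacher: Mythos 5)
Your proof is correct, and the overall cycle $(2)\Rightarrow(1)\Rightarrow(3)\Rightarrow(1)\Rightarrow(2)$ covers all the equivalences. Two of your links coincide with the paper's: your $(1)\Rightarrow(2)$ is exactly the paper's conditional substitution $\epsilon_\tau(x):=(A\wedge x)\vee(\neg A\wedge\tau(x))$, and your direct argument for $(1)\Rightarrow(3)$ is the same computation the paper packages inside \Cref{par-proj-interpol}. The genuine divergence is in $(3)\Rightarrow(1)$. The paper argues syntactically: it induces on the number of non-positive variables of $A$, using the auxiliary substitution $\theta^A_x(x)=A\wedge x$ together with \Cref{posit-eqiv} and \Cref{lem-posit} to reduce to the case where all variables are positive, at which point substituting $\top$ everywhere works; notably it never needs to know what $\uap A\parr$ looks like, only its defining universal property. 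You instead invoke the explicit semantic form of the classical uniform post-interpolant as existential projection over the variables of $A$, and then Skolemize: for each assignment $f$ of the (finitely many) parameters of $A$ you pick a witnessing variable assignment $g_f$ and encode it as $\sigma(x_i):=\bigvee\{\chi_f: g_f(x_i)=1\}$. This is a clean, calculation-free construction, and the one fact it leans on --- that $\uap A\parr$ is equivalent to $A[x:\top]\vee A[x:\bot]$ iterated over the variables of $A$, hence that $\uap A\parr=\top$ yields the family of witnesses $g_f$ --- is standard and easy to verify, so there is no gap. What each approach buys: yours gives a concrete closed-form unifier in one step and makes the ``Skolem function'' nature of unifiers in classical logic transparent; the paper's induction avoids committing to any explicit description of the post-interpolant and reuses machinery (positivity, $\theta^A_x$) in a style that transfers more readily to settings where the post-interpolant has no such simple normal form.
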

\begin{proof}
(i)$\Rightarrow$(ii): Let $\vdash\tau(A)$ and define the substitution $\epsilon_\tau$ as follows:
$$\epsilon_\tau(x):= (A\wedge x)\vee (\neg A\wedge \tau(x)) \quad \text{for every variable }x.$$
It is obvious that $\epsilon_\tau$ is $A$-identity. Moreover one may easily prove that both 
$A\vdashc \epsilon_\tau(A)$ and $\neg(A)\vdashc \epsilon_\tau(A)$ and thus $\vdashc \epsilon_\tau(A)$. 
\\[2mm]
(ii)$\Rightarrow$(iii): This holds by \Cref{par-proj-interpol}.
\\[2mm]
(iii)$\Rightarrow$(i):
We use induction on $n_A$, the number of non-positive variables occurring in $A$. 
First, observe that if $n_A=0$,
then for the substitution $\tau$ which replaces all variables with $\top$, by \Cref{posit-eqiv} we have
$\vdash A\to\tau(A)$ and $\tau(A)\in\lcalzpar$. Then, since $\uap A\parr=\top$, we get 
$\vdashc \tau(A)$, and thus $A$ is unifiable.
As induction hypothesis, assume that every formula $B$ with $n_B=n_A-1$ and $\uap B \parr=\top$ is unifiable.
Take some positive variable $x$ in $A$ and let $B:=\theta^A_x(A)$. Since 
$A\vdash \theta^A_x(y)\lr y$ for every $y$, we get $\vdash A\to B$ and thus $\uap B \parr=\top$. 
Moreover, \Cref{lem-posit} implies that $n_B=n_A-1$ and hence by the induction hypothesis, there is a substitution
$\gamma$ which unifies $B$. Therefore, $\gamma\circ\theta^A_x$ unifies $A$, as desired.
\end{proof}

\begin{lemma}\label[lemma]{c2}
    $\uap{\uap A \parr\to A}\parr= \top$.
\end{lemma}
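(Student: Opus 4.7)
Let $B := \uap A \parr \to A$. My plan is to apply \Cref{c1} to $B$: the equivalence (i) $\Leftrightarrow$ (iii) reduces the goal $\uap B \parr = \top$ to showing that $B$ is unifiable. Since $\uap A \parr \in \lcalzpar$ is fixed by every substitution, a unifier of $B$ is simply some $\tau$ with $\vdashc \uap A \parr \to \tau(A)$.

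The preliminary I would establish is the standard classical semantic description of the uniform post-interpolant: for every truth assignment $I$ to the parameters of $A$,
\[
    I \models \uap A \parr \iff (I, \rho) \models A \text{ for some assignment } \rho \text{ on the variables of } A.
\]
The $\Leftarrow$ direction is immediate from $\vdashc A \to \uap A \parr$ together with $\uap A \parr \in \lcalzpar$. For $\Rightarrow$, argue contrapositively: if no such $\rho$ exists and $\chi_I \in \lcalzpar$ is the characteristic formula of $I$ (the conjunction of parameter-literals fixing $I$), then $\vdashc A \to \neg \chi_I$, so $\vdashc \uap A \parr \to \neg \chi_I$, contradicting $I \models \uap A \parr$.

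With this in hand, I build a Skolem-style unifier. For each truth assignment $\sigma$ to the parameters of $A$ with $\sigma \models \uap A \parr$, pick a witness $\rho_\sigma$ on the variables of $A$ so that $(\sigma, \rho_\sigma) \models A$. For every variable $x$ occurring in $A$, set
\[
    \tau(x) := \bigvee_{\sigma \models \uap A \parr,\; \rho_\sigma(x) = \top} \chi_\sigma,
\]
leaving the remaining variables untouched. Given any $I \models \uap A \parr$, its restriction to the parameters of $A$ is some such $\sigma$, and by construction $I \models \tau(x)$ iff $\rho_\sigma(x) = \top$; hence evaluating $\tau(A)$ under $I$ coincides with evaluating $A$ at $(\sigma, \rho_\sigma)$, which is true by choice of $\rho_\sigma$. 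Thus $\vdashc \uap A \parr \to \tau(A)$, so $\tau$ unifies $B$, and \Cref{c1} delivers $\uap B \parr = \top$.

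The main obstacle is just pinning down the semantic characterisation of $\uap A \parr$; once that is in hand the Skolem construction of $\tau$ is a routine finite case analysis over the truth assignments to the parameters of $A$. An alternative, even more direct route would bypass \Cref{c1} by arguing semantically that every parametric $C$ with $\vdashc B \to C$ is valid (splitting on whether the parameter assignment satisfies $\uap A \parr$ and using the same characterisation to find a verifying extension in the positive case), but invoking \Cref{c1} keeps the argument short and fits the section's style.
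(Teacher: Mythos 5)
Your proof is correct, but it takes a genuinely different and heavier route than the paper's. The paper argues in three lines directly from the defining property of the post-interpolant: given $E\in\lcalzpar$ with $\vdashc (\uap A\parr\to A)\to E$, it notes $\vdashc A\to E$ and hence $\vdashc \uap A\parr\to E$, notes separately $\vdashc \neg\uap A\parr\to E$, and concludes $\vdashc E$ by classical case analysis. You instead reduce the claim via \Cref{c1} to unifiability of $B:=\uap A\parr\to A$, and establish unifiability by first proving the semantic description of $\uap A\parr$ (its truth at a parameter assignment is the satisfiability of $A$ over that assignment) and then assembling a Skolem-style unifier from characteristic formulas of parameter assignments. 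All of these steps check out, including the degenerate cases (empty disjunctions, unsatisfiable $\uap A\parr$), and citing \Cref{c1} is legitimate since its proof does not depend on \Cref{c2}. What your route buys is an explicit unifier of $\uap A\parr\to A$ and a direct verification of item (i) of \Cref{c1} for this formula, which is what \Cref{classical-proj-all-2} ultimately needs anyway; what it costs is the detour through the semantic characterisation of the interpolant and the implicit reliance on the chain (i)$\Rightarrow$(ii)$\Rightarrow$(iii) of \Cref{c1}, none of which is required for the bare statement. The ``more direct alternative'' you sketch at the end is essentially the paper's proof: the split on the truth value of $\uap A\parr$ is exactly the paper's pair of implications $\vdashc\uap A\parr\to E$ and $\vdashc\neg\uap A\parr\to E$, except that the paper settles the positive case purely syntactically via the universal property of $\uap A\parr$ rather than semantically.
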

\begin{proof}
    Let $E\in\lcalzpar$ be such that $\vdashc (\uap A \parr\to A)\to E$. 
    It suffices to show that $\vdashc E$.
    By assumption, we have $\vdashc A\to E$, and thus 
    $\vdashc \uap A \parr\to E$. On the other hand, $\vdashc (\uap A \parr\to A)\to E$ also implies 
    $\vdashc \neg \uap A \parr \to E$. Thus $\vdashc E$.
\end{proof}
\begin{theorem}\label[theorem]{classical-proj-all-2}
   All formulas are $\parr$-projective. Moreover, $\uap A\parr\to A$ for every $A$
   is projective.
\end{theorem}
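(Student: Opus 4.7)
The plan is to chain together Lemmas \ref{c1}, \ref{c2}, and \ref{lem-proj-par-proj}, which between them have already done all of the substantive work; what remains is purely bookkeeping.

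First I would dispatch the ``moreover'' clause, i.e.\ that $\uap A\parr\to A$ is projective. By \Cref{c2} applied to $A$, we have $\uap{\uap A\parr\to A}\parr = \top$. Now invoking the equivalence (iii)$\Leftrightarrow$(ii) from \Cref{c1} with the formula $\uap A\parr\to A$ in the role of ``$A$'' immediately gives that $\uap A\parr\to A$ is projective in the classical-logic sense.

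Next, for the main claim that every $A$ is $\parr$-projective, I would observe that since $\vdash A\to \uap A\parr$ (a defining property of the uniform post-interpolant), the biconditional $A\lr \uap A\parr$ is classically equivalent to $\uap A\parr\to A$. Projectivity is clearly invariant under classical equivalence: if $\theta$ is a projective unifier of some $B$ and $\vdash B\lr B'$, then $\vdash \theta(B')$ follows from $\vdash\theta(B)$, and the $A$-identity property $B\vdash\theta(a)\lr a$ transfers to $B'\vdash\theta(a)\lr a$. Applying this to $B := \uap A\parr\to A$ and $B' := A\lr \uap A\parr$ shows that $A\lr \uap A\parr$ is projective. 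Finally, \Cref{lem-proj-par-proj} translates this projectivity into $\parr$-projectivity of $A$ itself.

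I do not expect any real obstacle: the only point requiring the briefest check is invariance of projectivity under provable equivalence, which is immediate from the definitions. Everything else is a direct citation of the previous three lemmas in the correct order.
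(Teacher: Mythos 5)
Your proposal is correct and follows essentially the same route as the paper: apply \Cref{c2} and then \Cref{c1} to get projectivity of $\uap A\parr\to A$, and transfer this to $\parr$-projectivity of $A$ via \Cref{lem-proj-par-proj}. The only difference is that you explicitly justify the passage from $\uap A\parr\to A$ to $A\lr\uap A\parr$ (needed because \Cref{lem-proj-par-proj} is stated for the biconditional), a small step the paper's proof leaves implicit; your justification is valid.
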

\begin{proof}
Given $A\in\lcalz$, let $B:=\uap A \lcalzpar \to A$. 
\Cref{c2} implies that $\uap B\parr=\top$. Hence \Cref{c1} implies that $B$ is 
projective. Thus, \Cref{lem-proj-par-proj} gives the desired result.
\end{proof}
Recall from \cref{sec-definition} that a $\parr$-extension of a logic is an extension by an
$\lcalzpar$-formula.
\begin{corollary}\label[corollary]{unitary-cl-ext}
The unification type of $\parr$-extensions of classical logic is unitary.
\end{corollary}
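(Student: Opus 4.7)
The plan is to show that for any $\parr$-extension $\CPC + E$ (with $E \in \lcalzpar$), the projective $\parr$-parametrifier of $A$ guaranteed by \Cref{classical-proj-all-2} serves, essentially unchanged, as a most general unifier in the extension whenever $A$ is unifiable there. Concretely, for each $A \in \lcalz$, let $\theta$ be the projective parametrifier given by \Cref{classical-proj-all-2}, so that $\vdash^c \theta(A) \lr \uap{A}{\parr}$ and $A \vdash^c \theta(x) \lr x$ for every variable $x$. I claim that if $A$ is unifiable in $\CPC + E$, then $\{\theta\}$ is a complete set of unifiers in $\CPC + E$.

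First I would verify that $\theta$ is itself a unifier in $\CPC + E$, i.e.\ that $\vdash^c E \to \theta(A)$. Pick any $\tau$ with $\vdash^c E \to \tau(A)$ (which exists by the unifiability assumption). Since $\vdash^c A \to \uap{A}{\parr}$ and $\tau$ fixes the parametric language, applying $\tau$ gives $\vdash^c \tau(A) \to \uap{A}{\parr}$, hence $\vdash^c E \to \uap{A}{\parr}$; together with $\vdash^c \theta(A) \lr \uap{A}{\parr}$, this yields $\vdash^c E \to \theta(A)$, as required.

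Next I would show that every unifier $\tau$ of $A$ in $\CPC + E$ is less general than $\theta$ in that logic. Take $\lambda := \tau$. Because $\theta$ is $A$-identity, $A \vdash^c \theta(x) \lr x$ for each variable $x$; applying $\tau$ gives $\tau(A) \vdash^c \tau\theta(x) \lr \tau(x)$. Since $\vdash^c E \to \tau(A)$, we conclude $E \vdash^c \tau(x) \lr \lambda\theta(x)$ for every variable $x$, which is exactly the statement $\tau \leq \theta$ over $\CPC + E$. Combined with the previous step, this shows that $\{\theta\}$ is a complete (singleton) set of unifiers in $\CPC + E$, so the unification type of every $\parr$-extension of $\CPC$ is unitary.

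The argument is essentially a direct corollary of \Cref{classical-proj-all-2} together with the observation that $A$-identity of $\theta$ passes through any unifier $\tau$ once $\tau(A)$ is assumed, so there is no real obstacle beyond checking that $\vdash^c E \to \uap{A}{\parr}$ is forced by unifiability in the extension; the only care needed is in not conflating the ambient logic $\CPC$ with the extension $\CPC + E$ when instantiating the defining condition $\gamma \leq \theta$.
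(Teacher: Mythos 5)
Your proof is correct, and it takes a slightly different route from the paper's. The paper sets $B:=E\to A$, observes that unifiability of $B$ forces $\uap{B}{\parr}=\top$, invokes \Cref{c1} to get a projective unifier of $B$, and then runs the standard ``projective implies most general'' argument; the most general unifier it produces thus depends on $E$. You instead take the single projective parametrifier $\theta$ of $A$ from \Cref{classical-proj-all-2} (with $\vdashc\theta(A)\lr\uap{A}{\parr}$) and show that unifiability of $A$ in $\CPC+E$ already forces $\vdashc E\to\uap{A}{\parr}$, so this one $\theta$ is a unifier --- and, by the same $A$-identity argument, a most general one --- in every $\parr$-extension in which $A$ is unifiable at all. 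The underlying mechanism (applying $\tau$ to the $A$-identity condition and discharging $\tau(A)$) is the same in both proofs, but your version buys a little more: the m.g.u.\ can be chosen uniformly, independently of $E$. One small remark: the paper's argument actually yields the equivalences $\vdashc\tau\theta(x)\lr\tau(x)$ already in $\CPC$, whereas yours gives them modulo $E$; both suffice, since generality for the extension is what the definition of its unification type requires, and you are right to flag that this is the relevant ambient logic.
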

\begin{proof}
Let $A\in\lcalz$ and $E\in\lcalzpar$ and assume that 
$B:=E\to A$ is unifiable, seeking to find a single unifier of 
$B$ that is more general than all of its unifiers.
Since $B$ is unifiable, there is some $\gamma$ such that $\vdashc \gamma(B)$,
and hence $\uap {B}\parr=\top$. Then 
\Cref{c1} implies that $B$ is projective. Let $\theta$ be its projective unifier. 
We claim that $\theta$ is also its most general unifier. 
Take any unifier $\tau$ of $B$. Since $B\vdashc \theta(x)\lr x$, we get 
$\tau(B)\vdashc \tau\theta(x)\lr \tau(x)$ and thus $\vdashc \tau\theta(x)\lr \tau(x)$.
This shows that $\tau$ is less general than $\theta$.
\end{proof}

\section{\texorpdfstring{$\Gamma$}{G}-Projectivity for Intuitionistic Logic}
As we have shown in previous section, classical logic is  well-behaving 
for $\parr$-projectivity in the following sense: all formulas are classically 
$\parr$-projective (\Cref{classical-proj-all-2}). 
Nevertheless, such a nice behaviour does not hold in
intuitionistic logic. For example the formula $A:=x\vee\neg x$, is not 
$\parr$-projective by the following argument. 
First, observe that $\uap A \lcalzpar =\top$. 
Hence, a parametrifier of $A$ should be 
a unifier. By the disjunction property for intuitionistic logic, $A$ has only two 
unifiers: those who replace the variable $x$ by truth, and by falsity respectively; namely 
$\theta_1(x):=\top$ and $\theta_2(x):=\bot$. This means that $A$  does not have a 
most general unifier. On the other hand, if $A$
were $\top$-projective, its projective unifier should be a most general unifier, a contradiction with our previous observation. Thus, $A$ is not $\parr$-projective.

Given that not all formulas are $\parr$-projective, the question arises on which formulas 
are $\parr$-projective.
In this section, we will characterise $\parr$-projectivity, via Kripke semantics. 
It is a variant of 
\cite{Ghil99} in which it is proven that projectivity is equivalent to \textit{extendability}
for a given formula in the non-parametric language.\footnote{Roughly speaking, an extendable formula is a formula whose Kripke models 
could be extended from below.}  
Let us first look at some definitions.

Given $X\subseteq\atom$, we say that $\kcal_1$ is a $X$-variant of $\kcal_2$, if 
    (1) they share the same frame, 
    (2) the evaluations at every node other than the root are the same, and 
    (3) the evaluations at the root for all $a\in X$ are the same. 
    An $\emptyset$-variant is simply called a variant.
    We say that $A$ is weakly valid in $\kcal$, notation $\kcal\vdashm A$, 
    if $A$ is valid on every node other than the root.

\begin{definition}\label[definition]{def-Gamma-extendable-formula}
We say that $A$ is $B$-extendable, if 
$\vdashi A\to B$ and
for every Kripke model $\kcal$ with $\kcal\vdashm A$ and $\kcal\Vdash B$, 
there is some $\parr$-variant $\kcal'$ of $\kcal$ such that $\kcal'\Vdash A$.
Additionally, for a given set $\Gamma$ of formulas, we say that $A$ is $\Gamma$-extendable, if 
there is some $B\in\Gpar$ such that $A$ is $B$-extendable.
\end{definition}

Given $A\in\lcalz$ and a set of variables $X$, we define $\theta^X _A$ as follows:
$$
\theta^X _A(x):=
\begin{cases}
A \to x  &: x\in X\\
A\wedge x &: x \nin X 
\end{cases}
$$

\begin{theorem}\label[theorem]{par-proj-par-extendable}
$E$-extendability and $E$-projectivity are equivalent notions 
for every $E\in\lcalzpar$.
\end{theorem}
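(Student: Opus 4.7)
My plan is to prove the two directions of the equivalence separately.

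For the direction ``$E$-projective $\Rightarrow$ $E$-extendable'', let $\theta$ be a projective $E$-fier of $A$. By induction on the structure of $A$, using that $\theta$ is an $A$-identity, one gets $A \vdash \theta(A) \lr A$; combined with $\vdash \theta(A) \lr E$, this yields $\vdash A \to E$. Now given any Kripke model $\kcal$ with $\kcal \vdashm A$ and $\kcal \Vdash E$, I would set $\kcal' := \theta^*(\kcal)$. This $\kcal'$ is a $\parr$-variant of $\kcal$: parameters are preserved everywhere because $\theta$ fixes them, and at every non-root node $w$ the formula $A$ holds, so $\theta(a) \lr a$ also holds at $w$, making $\kcal$ and $\kcal'$ agree on all atomics at $w$. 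Finally, $\kcal' \Vdash A$ iff $\kcal \Vdash \theta(A)$, which follows from $\vdash \theta(A) \lr E$ together with $\kcal \Vdash E$.

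For the converse, ``$E$-extendable $\Rightarrow$ $E$-projective'', I would search for a projective $E$-fier within the family $\{\theta^X_A : X \subseteq \varr\}$ defined immediately before the theorem. Any such $\theta := \theta^X_A$ is an $A$-identity, since under $A$ both $A \to x$ and $A \wedge x$ are equivalent to $x$. Applying $\theta$ to $\vdash A \to E$ and using that $\theta$ fixes parameters gives $\vdash \theta(A) \to E$, so the remaining task is to pick $X$ so that $\vdash E \to \theta(A)$. I would argue this contrapositively through Kripke semantics: if $\mfrak \Vdash E$ and $\theta^*(\mfrak) \nVdash A$, take a minimal node $w$ at which $A$ fails in $\theta^*(\mfrak)$. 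The submodel $\gen{\theta^*(\mfrak)}{w}$ then weakly validates $A$ and, since $\theta$ fixes parameters and $E \in \lcalzpar$, still satisfies $E$. Applying $E$-extendability to this submodel produces a $\parr$-variant validating $A$, so some valuation of variables at $w$, compatible with the strict-successor valuations, forces $A$ there. Unpacking the definition, the valuation $\theta^X_A$ induces at $w$ consists precisely of those $x \in X$ that are forced at every strict successor of $w$, plus the parameters already forced at $w$ in $\mfrak$.

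The main obstacle is the uniform choice of $X$: different models $\mfrak$ may a priori require different root-valuations, while $\theta^X_A$ is determined by a single $X$. To overcome this, I would exploit the fact that the truth of $A$ at $w$ depends only on the valuations of its finitely many subformulas, so up to equivalence there are only finitely many ``local situations'' at a failing node $w$ compatible with $E$. $E$-extendability supplies an admissible root-valuation for each such situation, and a combinatorial argument — in the spirit of Ghilardi's classical proof that extendability implies projectivity in the non-parametric setting — then selects a single $X$ such that in every situation, the intersection of $X$ with the variables forced at every strict successor of the failing node yields an admissible witness. Verifying that this choice of $X$ works uniformly across all $\mfrak \Vdash E$ is the technical core of the proof.
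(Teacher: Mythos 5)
Your first direction ($E$-projective $\Rightarrow$ $E$-extendable) is correct and is essentially the paper's own argument: take $\kcal':=\theta^*(\kcal)$, use that $\theta$ fixes parameters and is an $A$-identity at the non-root nodes, and evaluate $\theta(A)$ via $\vdash\theta(A)\lr E$.

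The converse direction has a genuine gap, and the route you sketch for closing it cannot succeed. You look for a \emph{single} set $X$ such that $\theta^X_A$ is already a projective $E$-fier, and you describe the choice of this one $X$ as being ``in the spirit of Ghilardi's classical proof.'' But Ghilardi's construction --- and the paper's --- does not choose one $X$: it takes the \emph{composition} $\theta_A:=\theta_A^{X_0}\circ\theta_A^{X_1}\circ\cdots\circ\theta_A^{X_k}$ over \emph{all} subsets of the variables of $A$, ordered so that $X_i\subseteq X_j$ implies $i\geq j$, and then proves $\Vdash\theta_A(A)\lr E$ by induction on the height of models, with \Cref{lem-ghil} supplying the induction step. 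The composition is unavoidable for exactly the reason you flag as the ``main obstacle'': the valuation $\theta^X_A$ induces at a failing root is determined by $X$ alone (intersected with the variables forced at all strict successors), while different models genuinely require different root valuations. A concrete witness: let $A:=(p\lr x)\wedge(q\lr y)$ with parameters $p,q$ and $E:=\top$. This $A$ is $\top$-extendable (at the root, set $x$ true iff $p$ is, and $y$ true iff $q$ is). Now take two two-node models, each with a top node forcing all of $p,q,x,y$; in the first the root forces only $p$, in the second only $q$. Both weakly validate $A$, and in both the root of $\theta^X_A(\kcal)$ forces exactly $X\cap\{x,y\}$ among the variables; so the first model forces the choice $x\in X$, $y\notin X$, and the second forces the opposite. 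Hence no single $\theta^X_A$ unifies $A$, even though $A$ is projective via the composed substitution. Your proposed combinatorial selection of a uniform $X$ therefore has no possible completion; the argument must instead be organised around the composed $\theta_A$ and the height induction.
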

\begin{proof}
    Right-to-left: Let $\theta$ be a projective $E$-fier of $A$, 
    i.e.~$A\vdash \theta(x)\lr x$ for every $x\in\varr$ and $\vdash\theta(A)\lr  E$.
    We will show that $A$ is $E$-extendable. Let $\kcal\vdashm A$ with 
    $\kcal\Vdash E$ and take $\kcal':=\theta(\kcal)$. 
    Then it is not difficult to observe that $\kcal'$ is indeed a $\parr$-variant of $\kcal$ and $\kcal'\Vdash A$.
    \\[2mm]
    Left-to-Right:  Assume that $A$ is $E$-extendable. Hence $\vdashi A\to E$ and for every $\kcal$ with 
    $\kcal\Vdash E$ and $\kcal\vdashm A$, there is a $\parr$-variant $\kcal'$ of $\kcal$ such that 
    $\kcal'\Vdash A$. 
    We will construct a substitution $\theta$, which is a projective $E$-fier of $A$.
    Our construction is by iterated compositions of substitutions $\theta^X _A$, and is the same as the one first introduced by S.~Ghilardi in \cite{Ghil99}.
    
    Let $X:= \{ x_1 , x_2, \ldots, x_m\} $ be the set of all variables occurring in $A$.
    Consider an ordering of the 
    powerset of $X$, namely $P(X)= \{ X_0, X_1,\ldots, X_k\} $ such that 
    $X_i \subseteq X_j \Rightarrow i \geq j$. Then 
    define $\theta_A := \theta_A^{X_0} \circ \theta_A^{X_1} \circ \ldots \circ \theta_A^{X_k}$ 
    which is an $A$-projection, since it is composition of $A$-projections. 
    We claim that $\theta_A$ is a projective $E$-fier of $A$. 
    Since $\theta_A$ is a cumulative composition of $A$-identity substitutions, and $A$-identity substitutions are closed under
    compositions, we may infer that $\theta_A$ is also $A$-identity. So, it only remains to show that
    $\theta_A$ is $E$-fier of $A$.
    By induction on the height $n$ of the Kripke model $\kcal$, we prove $\kcal\Vdash \theta_A(A)\lr E$.

    As induction hypothesis, let $\kcal\vdashm \theta_A(A)\lr E$.
    Given a node $w$ of $\kcal$, let $\gen\kcal w$ be the restriction of $\kcal$ to all nodes that are 
    accessible from $w$, including $w$ itself. Then, by the induction hypothesis, 
    we have $\gen\kcal w \Vdash \theta_A(A)\lr E $ for every $w$ other than the root.
    If $\kcal\nVdash E $, since $\vdashi \theta_A(A)\to E $, we get $\kcal\Vdash \theta_A(A)\lr E $.
    So, we may assume that $\kcal\Vdash E $. 
    Therefore, by $E$-extendability of $A$, there is a  $\parr$-variant $\kcal'$ 
    of $\kcal$ such that $\kcal'\Vdash A$. 
    Then \Cref{lem-ghil} implies that $\kcal\Vdash \theta_A(A)$ and so $\kcal\Vdash \theta_A(A)\lr E $.
\end{proof}

\begin{corollary}\label[corollary]{lem-par-proj-proj}
$\parr$-projectivity of $A$ is equivalent to projectivity of $\uap A \parr\to A$.
\end{corollary}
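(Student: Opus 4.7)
My plan is to prove the equivalence by handling the two directions separately, leveraging \Cref{lem-proj-par-proj} in one direction and \Cref{par-proj-par-extendable} in the other.

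For the direction from projectivity of $\uap A\parr\to A$ to $\parr$-projectivity of $A$, the key observation is that $\uap A\parr\to A$ is intuitionistically equivalent to $A\lr \uap A\parr$, because $\vdashi A\to\uap A\parr$ by definition of the uniform post-interpolant. Since projectivity is invariant under provable equivalence (both the existence of a unifier and the $A$-identity condition transfer), this reduces the claim to exactly the statement of \Cref{lem-proj-par-proj}.

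For the converse direction, I would proceed semantically via extendability. By \Cref{par-proj-interpol}, $\parr$-projectivity of $A$ gives that $A$ is $\uap A\parr$-projective, and then \Cref{par-proj-par-extendable} yields that $A$ is $\uap A\parr$-extendable. The goal is to show that $\uap A\parr\to A$ is projective, i.e.~$\top$-projective, which by \Cref{par-proj-par-extendable} is equivalent to $\top$-extendability of $\uap A\parr\to A$. So take any finite rooted $\kcal$ with $\kcal\vdashm (\uap A\parr\to A)$ and split into cases on whether the root forces $\uap A\parr$. If it does not, then $\uap A\parr\to A$ holds vacuously at the root, hence throughout $\kcal$, so $\kcal$ itself serves as the desired $\parr$-variant. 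If it does, then by persistence $\uap A\parr$ holds at every node, so combining with weak validity of $\uap A\parr\to A$ gives $A$ at every non-root node, i.e.~$\kcal\vdashm A$. Now apply $\uap A\parr$-extendability of $A$ to obtain a $\parr$-variant $\kcal'$ with $\kcal'\Vdash A$, which a fortiori satisfies $\kcal'\Vdash \uap A\parr\to A$.

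The main obstacle is bookkeeping rather than conceptual: one has to verify carefully that the $\parr$-variant found in the second case does not disturb weak validity for non-root nodes (which is immediate, since a $\parr$-variant only alters the valuation at the root) and that the trivial case $\kcal\nVdash \uap A\parr$ really gives $\kcal\Vdash \uap A\parr\to A$ everywhere, which follows from persistence of $\uap A\parr$ failing at the root implying it fails at no descendant is false — but here the direction is the other way, so vacuity suffices only at the root, and the non-root nodes are handled by weak validity. The rest is a direct invocation of the earlier results.
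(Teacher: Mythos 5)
Your proposal is correct and follows essentially the same route as the paper: the first direction is delegated to \Cref{lem-proj-par-proj} (modulo the provable equivalence of $\uap A\parr\to A$ and $A\lr\uap A\parr$, which the paper also uses implicitly), and the converse passes through \Cref{par-proj-par-extendable} with the same case split on whether $\kcal\Vdash\uap A\parr$. Your explicit verification that $\kcal\vdashm A$ holds before invoking $\uap A\parr$-extendability is a detail the paper leaves tacit, but the argument is identical.
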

\begin{proof}
One direction has already been proved in \Cref{lem-proj-par-proj}. For the other way around,
assume that $A$ is $\parr$-projective and, hence, $\uap A \parr$-projective.
Then by \Cref{par-proj-par-extendable}, $A$ is also $\uap A \parr$-extendable. Again, by \Cref{par-proj-par-extendable},
it is enough to show that $\uap A\parr\to A$ is $\top$-extendable.  
So let $\kcal\vdashm \uap A\parr\to A$,
seeking to find some $\parr$-variant $\kcal'$ of $\kcal$ such that $\kcal'\Vdash \uap A\parr\to A$.
If $\kcal\nVdash \uap A\parr$, take $\kcal':=\kcal$ and we are done. Otherwise, by $\uap A \parr$-extendability 
of $A$, there is some $\parr$-variant $\kcal'$ of $\kcal$ such that $\kcal'\Vdash A$, as desired.
\end{proof}
\noindent
{\bf Question 1:} 
With the above corollary, we have a nice characterisation of projectivity of 
$\uap A \parr \to A$. We may wonder about (characterisation of) its dual notion, namely projectivity of $A\to \dap A\parr$, in which $\dap A\parr$ indicates the uniform pre-interpolant of $A$. 
\begin{lemma}\label[lemma]{lem-ghil}
    Let $\kcal'\Vdash \theta_A(A)$ is a $\parr$-variant of $\kcal$ and $\kcal\vdashm\theta_A(A)$. Then $\kcal\Vdash\theta_A(A)$. 
\end{lemma}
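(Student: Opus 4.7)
The plan is to reformulate the claim via the semantic identity $w \Vdash_\scrm \theta(B) \Leftrightarrow w \Vdash_{\theta(\scrm)} B$. Under this translation the hypotheses $\kcal \vdashm \theta_A(A)$ and $\kcal' \Vdash \theta_A(A)$ become $\theta_A(\kcal) \vdashm A$ and $\theta_A(\kcal') \Vdash A$ respectively, while the goal $\kcal \Vdash \theta_A(A)$ becomes $\theta_A(\kcal) \Vdash A$. The models $\theta_A(\kcal)$ and $\theta_A(\kcal')$ are themselves $\parr$-variants of one another: $\theta_A$ fixes every parameter, so the root parameter valuations coincide (since they do in $\kcal,\kcal'$), and the full valuations at every node strictly above the root coincide (since $\kcal,\kcal'$ agree there). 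The problem thus reduces to the following structural claim: among two $\parr$-variants of the form $\theta_A(\kcal),\theta_A(\kcal')$, if one weakly forces $A$ and the other forces $A$, then both force $A$ at the root.

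I would then establish this structural claim by analysing the iterated composition $\theta_A = \theta_A^{X_0} \circ \cdots \circ \theta_A^{X_k}$ and tracking how the variable valuations at the root evolve through each $\theta_A^{X_i}$-step. A single step $\theta_A^{Y}(\scrm)$ acts as identity on valuations at any node where $A$ is forced (since $\theta_A^Y$ is $A$-identity), and otherwise rewrites the root variable valuation deterministically as a function of the structure of $\scrm$ strictly above the root together with $Y$, hence without reference to the original variable valuation at the root. The ordering convention $X_i \subseteq X_j \Rightarrow i \geq j$ on the enumeration of $P(X)$ ensures that the subsets are processed from largest to smallest, so that the index corresponding to the root valuation of a $\parr$-variant of $\kcal$ forcing $A$---which exists by hypothesis, witnessed by $\theta_A(\kcal')$---falls at a position from which the $A$-identity property propagates $A$-forcing through all subsequent steps.

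The main obstacle is the precise combinatorial argument that such a critical step exists and that after it the root valuation both forces $A$ and is preserved by the remaining $\theta_A^{X_j}$ ($j$ on the appropriate side of the critical index). Concretely, one must identify an index $i^*$ for which $X_{i^*}$ matches (up to the effect of the prior compositions) the root valuation of some $\parr$-variant witnessing extendability, and verify that the step $\theta_A^{X_{i^*}}$ indeed installs a root configuration forcing $A$ despite the earlier rewrites. This amounts to the parameter-free Ghilardi argument from \cite{Ghil99}, unaffected by the presence of $\parr$ since $\theta_A$ does not touch parameters. Once this combinatorial verification is in place, the lemma follows immediately by translating $\theta_A(\kcal) \Vdash A$ back to $\kcal \Vdash \theta_A(A)$.
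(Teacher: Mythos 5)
Your reduction and your inventory of ingredients are correct and match the paper's proof exactly: the paper likewise works with $\theta_A(\kcal)$ and $\theta_A(\kcal')$, exploits that each $\theta^Y_A$ is an $A$-identity (so forcing of $A$, once achieved at some stage, propagates through all later stages), and uses the observation that when the root fails $A$ the step $\theta^Y_A$ resets the root's variable valuation deterministically, independently of its previous variable valuation. The problem is that everything after that is deferred: the passage beginning ``the main obstacle is the precise combinatorial argument\dots'' names exactly the content of the lemma and then outsources it to \cite{Ghil99}. That step is not bookkeeping; it is the proof. What has to be supplied (and what the paper supplies) is: introduce the partial compositions $\theta^i_A=\theta^{X_0}_A\circ\cdots\circ\theta^{X_i}_A$; reduce to showing that $\kcal\vdashm\theta^i_A(A)$ together with $\kcal'\Vdash\theta^j_A(A)$ for a $\parr$-variant $\kcal'$ yields $\kcal\Vdash\theta^r_A(A)$ for some $r$; take $Y$ to be the root valuation of $\theta^j_A(\kcal')$ \emph{at the appropriate stage} $j$ (chosen minimal), not of $\kcal'$ itself, and let $r$ be the index with $X_r=Y\cap\varr$; use minimality and the ordering convention $X_i\subseteq X_j\Rightarrow i\geq j$ to get $r\geq j\geq i$; and finally verify that either some earlier stage already forces $A$ at the root of $\kcal$, or the stage $\theta^{X_r}_A$ installs exactly the variable valuation $Y\cap\varr$ there, which forces $A$ because it reproduces the root of the witnessing variant. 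The paper organises this as an induction on the height of $\kcal$, which your sketch does not mention; your phrase ``tracking how the valuations evolve'' silently conflates the root valuation of $\kcal'$ with the index of the critical subset, and disentangling the two is precisely where the work lies.

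Your parenthetical claim that the argument is ``unaffected by the presence of $\parr$'' is true but is itself the one point a reader would want checked rather than asserted: it holds because $\theta^Y_A$ fixes every parameter, so each stage can only produce a $\parr$-variant of the previous root valuation, and because the witness $\kcal'$ agrees with $\kcal$ on parameters at the root, so matching the variable part $Y\cap\varr$ of its root valuation suffices to reproduce its full root valuation and hence to force $A$. With the combinatorial verification written out your argument would coincide with the paper's; as it stands it is a correct plan with the decisive step missing.
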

\begin{proof}
    We first make the observation that for any model $\kcal\not \Vdash A$ then for any set $Y$ and any $x\in Y\cap \varr$, $\theta ^{Y}_A(\kcal)\Vdash x$ iff $x\in Y$. Now for every $i\leq k$ let $\theta_A^i =\theta ^{X_0}_A \circ \ldots \circ \theta ^{X_i}_A $, 
    which are all $A$-identities. 
    As such, if $\kcal \Vdash \theta_A^i(A)$ for some $i\leq k$, then 
    $\kcal \Vdash \theta_A^j(A)$ for all $i\leq  j \leq k$. 
    Hence, it is sufficient to show that if $\kcal\vdashm \theta_A^i(A)$  
    and $\kcal'\Vdash \theta_A^j(A)$   with $i\leq j \leq k$, then $\kcal \Vdash \theta_A^r(A)$ for some $r\leq k$. 
    We prove this by induction on the height $n$ of $\kcal$.
    \begin{itemize}[leftmargin=*]
        \item $n=0$. Let $Y$ be the valuation of $\theta_A^i(\kcal')$ at its root and $Y\cap\varr = X_j$ for some $j$. 
        If $\theta^{j-1}_A (\kcal) \Vdash A$, then this step is done. If not, then for every variable $x \in X$, 
        $\theta^{j}_A (\kcal) \Vdash x $ iff $x \in Y$, and so $\theta^{j}_A (\kcal) \Vdash A$.
        \item Assume the induction claim for every model of height $k<n$ and let $\kcal$ be a model of height $n$ such that 
        $\kcal\vdashm \theta^i_A(A)$ with $i$ being least with this property. 
        Let $j$ be least 
        for which there exists a $\parr$-variant $\kcal'$ of $\kcal$ with 
        $\kcal'\Vdash \theta_A^j(A)$, then $ j\geq i$ as 
        $\kcal'\Vdash \theta_A^j(A)$ implies that $\kcal'\vdashm \theta_A^j(A)$ 
        hence $\kcal\vdashm \theta_A^j(A)$ and so $i\leq j$.
        
        Let $Y$ be the valuation of $\theta_A^j(\kcal')$ at its root and let $r$ be such that $Y\cap\,\varr = X_r$.  
        If $r< j$, then since $\theta_A^{j-1}(\kcal')\not\Vdash A$, we get for every variable $x\in X$ that 
        $\theta^j_A(\kcal')\Vdash x$ iff $x \in X_j$, a contradiction. Hence $Y\cap\varr = X_r $ for some $r\geq j$.
        If $j=r$ or $\theta ^{r-1}_A (\kcal) \Vdash A$ then $\theta ^{r}_A (\kcal) \Vdash A$. 
        Otherwise, since $\theta ^{r-1}_A (\kcal) \not\Vdash A$ we once more have for every variable $x \in X$ that 
        $\theta ^{r}_A (\kcal) \Vdash x $ iff $x \in Y\cap \varr $ and so $\theta ^{r}_A (\kcal) \Vdash A$.
    \end{itemize}
\end{proof}

\section{Unification type of extensions of intuitionistic logic}\label{sec-rel-proj-approximation}
Thanks to \cite{Ghil99}, we know that the unification type of 
intuitionistic logic is finitary. 
In this section, we show in \Cref{par-proj-approx} that 
the unification type of $\parr$-extensions of intuitionistic logic is also finitary.
\\[1mm]
\textbf{Convention.} All over this section, we assume that  $\atom$, the set of atomic formulas  is finite. 
\\[1mm]
We say that two Kripke models are $\parr$-equivalent ($\kcal_1\equiv^\parr\kcal_2$), 
if they share the same frame and for every node $w$ and 
every $p\in\parr$ we have $\kcal_1,w\Vdash p$ iff $\kcal_2,w\Vdash p$.

Given a class $\mathscr M$ of Kripke models, 
$\sum \mathscr M$ indicates the disjoint union of Kripke models in $\mathscr M$ with a fresh root
below with empty valuation. 
\begin{definition}\label[definition]{def-Gamma-extendable}
A class of Kripke models $\mathscr K$ is called $B$-extendable, if: 
$\mathscr K\Vdash B$ and 
     for every finite (including empty) 
    $\mathscr M\subseteq\mathscr K$, if  a variant $\kcal$ of $\sum \mathscr M$
    forces $B$, then there is a  $\parr$-variant of  $\kcal$ which belongs to $\mathscr K$. 
We say that $\mathscr K$ is $\Gamma$-extendable, if there is some $B\in\Gpar$ such 
that $\mathscr K$ is $B$-extendable.
    Then $\top$-extendability is also called extendability. 
It is not difficult to observe that $A$ is $B$-extendable 
(\Cref{def-Gamma-extendable-formula}) iff its class of models is so.
\end{definition}

Let $\complexity A$ indicate the maximum number of nested implications in $A$. More precisely,
\begin{itemize}
    \item $\complexity A=0$ for $A\in\atom\cup\{\bot\}$.
    \item $\complexity {A\wedge B}=\complexity{A\vee B}=\max\{\complexity A,\complexity B\}$.
    \item $\complexity{A\to B}:=1+\max\{\complexity A,\complexity B\}$.
\end{itemize}
In what follows, we will make use of the notions of bounded bisimilarity and bounded sub-bisimilarity. 
We define these notions inductively for given models $\kcal$ and $\kcal'$ with respective roots $w_0$ and $w_0'$:
\begin{itemize}
    \item $\kcal \bisim{0} \kcal'$ iff $\Val{\kcal}{w_0} = \Val{\kcal'}{w_0'}$.
    \item $\kcal \bisim{n+1} \kcal'$ iff 
    \begin{itemize}
        \item {\sf Forth:} $ \forall\, w \in \kcal\ \exists\, w' \in \kcal'\ \gen{\kcal}{w} \bisim{n} \gen{\kcal'}{w'} $ and
        \item {\sf Back:} $ \forall\, w' \in \kcal'\ \exists\, w \in \kcal\ \gen{\kcal}{w} \bisim{n} \gen{\kcal'}{w'} $.
    \end{itemize}
    \item $\kcal \leq_0 \kcal'$ iff $\Val{\kcal}{w_0} \supseteq \Val{\kcal'}{w_0'}$.
    \item $\kcal \leq_{n+1} \kcal'$ iff
    \begin{itemize}
        \item {\sf Forth:} $ \forall\, w \in \kcal\ \exists\, w' \in \kcal'\ \gen{\kcal}{w} \bisim{n} \gen{\kcal'}{w'} $.
    \end{itemize}
\end{itemize}
\begin{lemma}\label[lemma]{36}
    Let $\kcal\leq_n\kcal'$ and $\complexity A\leq n$.
    If $\kcal'\Vdash A$, then $\kcal\Vdash A$.
\end{lemma}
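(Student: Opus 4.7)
The plan is to proceed by induction on the structure of $A$, using throughout the fact that in rooted intuitionistic Kripke models persistence yields $\kcal\Vdash A$ iff $\kcal,w_0\Vdash A$. The atomic and $\bot$ cases reduce to checking $\Val{\kcal}{w_0}\supseteq\Val{\kcal'}{w_0'}$: this is immediate when $n=0$, and when $n\geq 1$ one applies the Forth clause at the root $w_0$ of $\kcal$ to obtain some $w'$ in $\kcal'$ with $\gen{\kcal}{w_0}\bisim{n-1}\gen{\kcal'}{w'}$, which gives $\Val{\kcal}{w_0}=\Val{\kcal'}{w'}\supseteq\Val{\kcal'}{w_0'}$ by persistence in $\kcal'$. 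The conjunction and disjunction cases follow immediately from the induction hypothesis applied with the same $n$, since $\complexity{B\wedge C}=\complexity{B\vee C}=\max\{\complexity{B},\complexity{C}\}$.

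The implication case $A=B\to C$ is the only substantive step, and here $n\geq 1$ with $\complexity{B},\complexity{C}\leq n-1$. To verify $\kcal\Vdash B\to C$ it suffices to check at each node $u$ of $\kcal$ that $u\Vdash B$ implies $u\Vdash C$. Fix such a $u$ and apply the Forth clause of $\kcal\leq_n\kcal'$ at $u$ to obtain $u'$ in $\kcal'$ with $\gen{\kcal}{u}\bisim{n-1}\gen{\kcal'}{u'}$. Appealing to the standard preservation fact that $\bisim{m}$ preserves intuitionistic formulas of implication-depth at most $m$ in both directions, I transfer $B$ to deduce $\kcal',u'\Vdash B$; combining with $\kcal',w_0'\Vdash B\to C$ and $u'\sce w_0'$ gives $\kcal',u'\Vdash C$; and transferring $C$ back along the same bisimulation yields $\kcal,u\Vdash C$, as required.

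The main obstacle, such as it is, lies in isolating the auxiliary preservation fact for $\bisim{m}$: if $\kcal_1\bisim{m}\kcal_2$ and $\complexity{D}\leq m$ then $\kcal_1\Vdash D$ iff $\kcal_2\Vdash D$. This is the standard Ehrenfeucht--Fra\"iss\'e-style preservation result for intuitionistic logic, proved by a parallel induction on $D$ that uses both the Forth and the Back clauses of $\bisim{m}$. It is precisely at this point that the symmetry of $\bisim{m}$ (as opposed to the one-sided $\leq_m$) becomes essential; once this fact is available the main argument is a routine induction, and everything else in the proof amounts to bookkeeping about the valuation supset condition at level $0$.
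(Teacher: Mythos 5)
Your proof is correct; the paper offers no argument of its own for this lemma (it simply cites Ghilardi), and your structural induction --- with the one-sided Forth clause of $\leq_n$ handling the outermost implication and the symmetric $\bisim{m}$-preservation lemma doing the two-way transfer underneath --- is exactly the standard argument from that reference. The only point worth underlining is the one you already flagged at the outset: rootedness is genuinely needed to reduce $\kcal\Vdash A$ to forcing at the root, since otherwise the disjunction case would not follow nodewise from the induction hypothesis.
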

\begin{proof}
    See \cite{Ghil99}.
\end{proof}

Then for a class of Kripke models $\mathscr K$, 
we define 
$$\dclosur{\mathscr K} n:=\{\kcal: \exists \kcal'\in\mathscr K\ (\kcal\leq_n\kcal')\}.$$
We call a class $\scrk$ of Kripke models \emph{stable}, iff for every
$\kcal \in \scrk$ and every node $w$ of $\kcal$, $\gen{\kcal}{w}\in\scrk$. 
Furthermore, we say that $\scrk$ is $\leq_n$-downward closed, if 
$\kcal'\leq_n\kcal\in  \scrk  $ implies 
$\kcal'\in \scrk $.
Observe that if $\dclosur{\scrk} n = \scrk$ then $\scrk$ is stable and 
$\leq_n$-downward closed.

\begin{lemma}\label[lemma]{extendable-downward-closur}
Let $\scrk$ be a stable class of $E$-extendable 
Kripke models for some $E\in\lcalzpar$. 
Then for every $n> \complexity{E}$, $\dclosur{\mathscr K} n$ is also
stable and $E$-extendable.
\end{lemma}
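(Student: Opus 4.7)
The plan is to handle stability and $E$-extendability of $\dclosur{\scrk}{n}$ separately. Stability follows at once from the reflexivity and transitivity of $\leq_n$: for every model $\kcal$ and every node $w$ one has $\gen{\kcal}{w}\leq_n\kcal$, hence if $\kcal\leq_n\kcal'$ witnesses $\kcal\in\dclosur{\scrk}{n}$ with $\kcal'\in\scrk$, then $\gen{\kcal}{w}\leq_n\kcal'$ and so $\gen{\kcal}{w}\in\dclosur{\scrk}{n}$. For the first clause of $E$-extendability, every $\kcal\in\dclosur{\scrk}{n}$ satisfies $\kcal\leq_n\kcal'$ for some $\kcal'\in\scrk$, so $\kcal'\Vdash E$ by $E$-extendability of $\scrk$, and \Cref{36} combined with $\complexity{E}<n$ gives $\kcal\Vdash E$.

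The substance of the proof is the extension clause. Fix a finite $\scrm\subseteq\dclosur{\scrk}{n}$ and a variant $\kcal$ of $\sum\scrm$ with $\kcal\Vdash E$. For each $\kcal_i\in\scrm$, I would pick $\kcal_i^+\in\scrk$ with $\kcal_i\leq_n\kcal_i^+$ and apply the Forth clause of $\leq_n$ at the root of $\kcal_i$ to obtain a node $w_i^+\in\kcal_i^+$ such that $\kcal_i^*:=\gen{\kcal_i^+}{w_i^+}$ satisfies $\kcal_i\bisim{n-1}\kcal_i^*$. Stability of $\scrk$ gives $\kcal_i^*\in\scrk$. Replacing each subtree $\kcal_i$ in $\kcal$ by $\kcal_i^*$ (keeping the root valuation) yields a model $\tilde\kcal$. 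A routine induction shows that when corresponding subtrees are $\bisim{m}$-equivalent and root valuations agree, the whole models are $\bisim{m}$-equivalent; at $m=n-1$ this yields $\kcal\bisim{n-1}\tilde\kcal$, and bisimulation invariance with $\complexity{E}\leq n-1$ then gives $\tilde\kcal\Vdash E$. Since $\tilde\kcal$ is a variant of $\sum\{\kcal_i^*\}$, the $E$-extendability of $\scrk$ supplies a $\parr$-variant $\hat\kcal\in\scrk$ of $\tilde\kcal$. Finally I would define $\kcal''$ to be the $\parr$-variant of $\kcal$ whose root variable valuation matches $\hat\kcal$'s.

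It remains to certify $\kcal''\leq_n\hat\kcal$, which places $\kcal''$ in $\dclosur{\scrk}{n}$ and completes the extension clause. For the Forth condition at the root of $\kcal''$, the same componentwise bisimulation argument applied to $\kcal''$ and $\hat\kcal$ gives $\kcal''\bisim{n-1}\hat\kcal$, so the root of $\hat\kcal$ is the required witness. The hard part will be the Forth condition at a non-root node $v$ inside some subtree $\kcal_i$ of $\kcal''$: one needs $v'\in\hat\kcal$ with $\gen{\kcal_i}{v}\bisim{n-1}\gen{\hat\kcal}{v'}$. Applying Forth of $\kcal_i\leq_n\kcal_i^+$ at $v$ yields a node $v^+\in\kcal_i^+$ with exactly this bisimulation level, and the remaining subtlety is to arrange that $v^+$ lies in the chosen submodel $\kcal_i^*$ (i.e.~$v^+\geq w_i^+$), so that $\gen{\hat\kcal}{v^+}=\gen{\kcal_i^+}{v^+}$. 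I expect this to be the principal obstacle; it should be handled either by a minimal, downward-closed choice of $w_i^+$ among the candidates witnessing $\kcal_i\bisim{n-1}\gen{\kcal_i^+}{w_i^+}$, or by enlarging the base family before invoking the extendability of $\scrk$ so that $\hat\kcal$ already contains generated submodels matching every required Forth target.
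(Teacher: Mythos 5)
Your overall strategy is the paper's: pass from a finite $\scrm\subseteq\dclosur{\scrk}{n}$ to a finite family of $\bisim{n-1}$-equivalent members of $\scrk$, invoke the $E$-extendability of $\scrk$ on the resulting sum, and transport the root valuation of the extension back to $\kcal$. The stability argument and the derivation of $\dclosur{\scrk}{n}\Vdash E$ from \Cref{36} are also as in the paper. However, your primary construction has a real gap at exactly the point you flag. Replacing each $\kcal_i$ by a single $\kcal_i^*=\gen{\kcal_i^+}{w_i^+}$ chosen to match only the \emph{root} of $\kcal_i$ does not let you discharge the Forth obligation of $\kcal''\leq_n\hat\kcal$ at a non-root node $v$ of $\kcal_i$: the witness $v^+$ supplied by $\kcal_i\leq_n\kcal_i^+$ need not lie above $w_i^+$, and your first proposed repair (a minimal choice of $w_i^+$) cannot save this, because from $\kcal_i\bisim{n-1}\kcal_i^*$ one only recovers $\bisim{n-2}$-matches for the non-root nodes of $\kcal_i$ inside $\kcal_i^*$ --- one level too low --- no matter how $w_i^+$ is chosen.

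The correct resolution is your second alternative, which is precisely what the paper does from the outset: using stability of $\scrk$ and the Forth clause of $\leq_n$, form a finite $\scrm'\subseteq\scrk$ containing, for \emph{every} $\mcal\in\scrm$ and \emph{every} node $w\in\mcal$, a model $\mcal'\in\scrk$ with $\gen{\mcal}{w}\bisim{n-1}\mcal'$ (finiteness holds since there are only finitely many $\bisim{n-1}$-classes over a finite $\atom$). Then the variant $\kcal'$ of $\sum\scrm'$ sharing its root valuation with $\kcal$ satisfies $\kcal\bisim{n-1}\kcal'$, so $\kcal'\Vdash E$, extendability of $\scrk$ yields $\overline{\kcal'}\in\scrk$, and every Forth target needed for $\overline{\kcal}\leq_n\overline{\kcal'}$ is now available as a generated submodel sitting directly above the root of $\overline{\kcal'}$. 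So you have correctly located the crux and named the right fix, but the proof as written does not carry it out, and the construction you actually develop would fail without it.
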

\begin{proof}
By our above observation, we only need to show that $\dclosur{\scrk} n$ is $E$-extendable. 
Since $\scrk \Vdash E$ and $\complexity E < n$, we get from \Cref{36} that $\dclosur{\scrk} n \Vdash E $. 
Let $\scrm$ be a finite subset of $\dclosur{\mathscr K} n$;
then due to the stability of $\scrk$, there is a minimal set
$\scrm'\subseteq \scrk$ such that for every $ \mcal \in \scrm$ 
and every $w \in \mcal$ there is a model $\mcal' \in \scrm'$ 
such that $\gen \mcal w \bisim{n-1} \mcal'$. 
W.l.o.g. $\scrm'$ will be finite as there can only be at most finitely many non-$\bisim{n-1}$ Kripke models.
Assume that there is a variant $\kcal$ of $\sum \scrm $ such that $\kcal \Vdash E$ and let $\kcal'$ be a variant of $\sum \scrm '$ with its root evaluated the same as $\kcal$.
We show that $\kcal \bisim{n-1} \kcal'$.
\begin{itemize}
    \item By our assumption for $\mcal'$ for every $w$ other than the root, there is some $\mcal' \in \scrm'$ such that $\gen\kcal w \bisim{n-2} \mcal'$ and vice versa.
    \item The roots are $0$-bisimilar to each other and, by the above, $\kcal \bisim{n-1} \kcal'$.
\end{itemize}
Hence $\kcal' \Vdash E$, so there is a $\parr$-variant $\overline{\kcal'} \in \scrk$ of $\kcal'$.
Consider the $\parr$-variant $\overline{\kcal}$ of $\kcal$ whose root is evaluated the same as in $\overline{\kcal'}$.
By omitting the backward direction in the proof of $\kcal \bisim{n-1} \kcal'$, one can easily show that $\overline{\kcal} \leq_n \overline{\kcal'}$.
\end{proof}

\begin{corollary}\label[corollary]{extendable-downward-closure-lc}
Let $\Gamma$ be finite. Then there is $m$ such that for every stable class $\scrk$ of 
$\Gamma$-extendable Kripke models and every $n> m$, $\dclosur{\mathscr K} n$ is also
stable and $\Gamma$-extendable.
\end{corollary}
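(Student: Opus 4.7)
The plan is to reduce this corollary directly to \Cref{extendable-downward-closur} by choosing $m$ uniformly based on the finite set $\Gpar$. Since $\Gamma$ is finite by assumption, its parametric part $\Gpar = \Gamma\cap\lcalzpar$ is also finite, so I can safely set
\[
m := \max\{\complexity{B} : B\in\Gpar\}.
\]
This maximum exists precisely because $\Gpar$ is finite, which is where the finiteness hypothesis on $\Gamma$ is used.

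Next, I would take an arbitrary stable, $\Gamma$-extendable class $\scrk$ of Kripke models and an arbitrary $n > m$. By the definition of $\Gamma$-extendability (\Cref{def-Gamma-extendable}), there is some witness $B\in\Gpar$ such that $\scrk$ is $B$-extendable. By the choice of $m$, we have $\complexity{B}\leq m < n$, so the hypotheses of \Cref{extendable-downward-closur} are met for this particular $B$. Applying that lemma gives that $\dclosur{\scrk}{n}$ is stable and $B$-extendable, and hence $\Gamma$-extendable (using the same witness $B$).

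I do not expect any real obstacle here; the only subtle point is noticing that the witness $B\in\Gpar$ may depend on $\scrk$, but this dependence is harmless because the bound $m$ is taken uniformly over all of $\Gpar$. In particular, the same $m$ works for every stable $\Gamma$-extendable class, which is exactly what the statement of the corollary demands. The finiteness of $\Gamma$ is essential: without it, $\{\complexity{B} : B\in\Gpar\}$ could be unbounded and no single $m$ would suffice.
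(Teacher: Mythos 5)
Your proof is correct and is essentially identical to the paper's: the paper also sets $m=\max\{\complexity{E}:E\in\Gpar\}$ and applies \Cref{extendable-downward-closur} to the witness $B\in\Gpar$. You merely spell out the (harmless) dependence of the witness on $\scrk$, which the paper leaves implicit.
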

\begin{proof}
    Let $m=\max\{ \complexity{E}: E\in\Gpar\}$ and apply \Cref{extendable-downward-closur}.
\end{proof}

\begin{lemma}\label[lemma]{modA-downward-closed}
    A class $\mathscr K$ of Kripke models is equal to the class of all models of some $A$ with $\complexity A \leq n$ iff 
    $\mathscr K$ is $\leq_n$-downward closed.
\end{lemma}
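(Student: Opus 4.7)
The plan is to prove the two directions separately. The forward direction follows directly from \Cref{36}: if $\mathscr K = \{\kcal : \kcal \Vdash A\}$ with $\complexity{A} \leq n$, and $\kcal' \leq_n \kcal \in \mathscr K$, then $\kcal \Vdash A$, and \Cref{36} yields $\kcal' \Vdash A$, whence $\kcal' \in \mathscr K$.

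For the converse, I would first observe that, by the standing convention that $\atom$ is finite together with the restriction to finite rooted models, there are only finitely many equivalence classes under $\bisim{n}$. This is established by induction on $n$: at level $0$ there are at most $2^{|\atom|}$ possible root-valuations, and a $\bisim{n+1}$-type is determined by Forth and Back conditions on the $\bisim{n}$-types of successor submodels, of which there are only finitely many by the inductive hypothesis.

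The heart of the argument is to construct, for each finite rooted $\kcal$, a \emph{characteristic formula} $\chi^n_\kcal$ of complexity at most $n$ such that, for every finite rooted $\kcal'$, $\kcal' \Vdash \chi^n_\kcal$ iff $\kcal' \leq_n \kcal$. This is a standard construction following the scheme of \cite{Ghil99}: the base case is handled by $\chi^0_\kcal := \bigwedge \{a \in \atom : \kcal \Vdash a\}$, and the inductive step encodes the Forth clause of $\leq_{n+1}$ by a conjunction, indexed by nodes of $\kcal$, of implications whose antecedents and consequents refer to the $\bisim{n-1}$-types of successors. I expect the main technical obstacle to be the bookkeeping needed to keep $\complexity{\chi^n_\kcal} \leq n$ rather than something larger; this is essentially the reason the right object here is the one-sided notion $\leq_n$ rather than $\bisim{n}$.

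Once such characteristic formulas are in hand, the backward direction is immediate. Since $\bisim{n}$ implies $\leq_n$ and $\bisim{n}$ is symmetric, $\mathscr K$ is automatically closed under $\bisim{n}$. Pick representatives $\kcal_1, \ldots, \kcal_k$ of the finitely many $\bisim{n}$-classes contained in $\mathscr K$, and set $A := \bigvee_{i=1}^{k} \chi^n_{\kcal_i}$, which has complexity $\leq n$. Then if $\kcal' \Vdash A$, we have $\kcal' \leq_n \kcal_i$ for some $i$, so $\kcal' \in \mathscr K$ by $\leq_n$-downward closure; and conversely, if $\kcal' \in \mathscr K$, then $\kcal' \bisim{n} \kcal_i$ for some $i$, hence $\kcal' \leq_n \kcal_i$, so $\kcal' \Vdash \chi^n_{\kcal_i}$, so $\kcal' \Vdash A$, as required.
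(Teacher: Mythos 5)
Your proposal is correct and follows essentially the same route as the paper, whose own ``proof'' of this lemma is simply the citation ``See \cite{Ghil99}'': the forward direction via \Cref{36} and the backward direction via finitely many $\bisim{n}$-classes together with characteristic formulas $\chi^n_\kcal$ of complexity at most $n$ defining $\{\kcal' : \kcal'\leq_n\kcal\}$ is exactly Ghilardi's argument. The only part you leave as a sketch (the inductive step keeping $\complexity{\chi^n_\kcal}\leq n$) is precisely what the cited reference supplies, so there is no substantive gap relative to what the paper itself provides.
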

\begin{proof}
    See \cite{Ghil99}.
\end{proof}

\begin{theorem}\label[theorem]{par-proj-approx-general}
Given a finite $\Gamma$ and $A$, there is a finite set $\Pi$ of $\Gamma$-projective formulas with
the following properties:
\begin{enumerate}
    \item $\vdashi\bigvee\Pi\to A$.
    \item $A\adms{}\Gamma \Pi$. (see \cref{sec-admissibility} for the definition of $\adms{}\Gamma$.)
\end{enumerate}
\end{theorem}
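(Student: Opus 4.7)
The plan is to adapt Ghilardi's approximation argument from \cite{Ghil99} to the $\Gamma$-relativised setting, leveraging the semantic characterisation of $\Gamma$-projectivity as $\Gamma$-extendability established in \Cref{par-proj-par-extendable}. First I would fix an integer $n$ strictly greater than $\complexity{A}$ and than $\complexity{E}$ for every $E \in \Gpar$. Under the standing assumption that $\atom$ is finite, there are only finitely many $\bisim{n}$-equivalence classes of finite rooted Kripke models, and hence only finitely many $\leq_n$-downward closed stable classes. Let $\mathscr{K}_A$ denote the class of finite rooted models of $A$; by \Cref{modA-downward-closed}, $\mathscr{K}_A$ is itself $\leq_n$-downward closed.

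Next, I would consider the (necessarily finite) family $\mathcal{F}$ of all stable, $\leq_n$-downward closed subclasses $\scrk \subseteq \mathscr{K}_A$ that are $\Gamma$-extendable. By \Cref{extendable-downward-closur} and \Cref{extendable-downward-closure-lc}, any stable $\Gamma$-extendable subclass of $\mathscr{K}_A$ is contained in its $\leq_n$-downward closure, which is again $\Gamma$-extendable and still sits inside $\mathscr{K}_A$. By \Cref{modA-downward-closed}, each $\scrk \in \mathcal{F}$ is the model class of a formula $B_\scrk$ of complexity at most $n$, and by \Cref{par-proj-par-extendable} each such $B_\scrk$ is $\Gamma$-projective. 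Define
\[
\Pi := \{B_\scrk : \scrk \in \mathcal{F}\},
\]
a finite set of $\Gamma$-projective formulas.

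Property~(1), $\vdashi \bigvee \Pi \to A$, is immediate since each $\Mod{B_\scrk} = \scrk$ is contained in $\mathscr{K}_A$. For property~(2), $A \adms{}{\Gamma} \Pi$, given $E \in \Gpar$ and a substitution $\theta$ with $\vdashi E \to \theta(A)$, I would introduce the class
\[
\mathscr{K}^\theta := \{\kcal : \kcal \Vdash E \text{ and } \theta(\kcal) \Vdash A\},
\]
and show that $\mathscr{K}^\theta$ is stable, $\leq_n$-downward closed, and $E$-extendable. Stability follows because $\theta$ commutes with taking generated submodels; $\leq_n$-downward closure follows from \Cref{36} applied to $A$ and to $E$ via our choice of $n$. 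Once these are established, $\mathscr{K}^\theta \in \mathcal{F}$, so $\mathscr{K}^\theta = \Mod{B}$ for some $B \in \Pi$, whence $\vdashi E \to \theta(B)$, as required.

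The main obstacle will be verifying $E$-extendability of $\mathscr{K}^\theta$: given a finite $\scrm \subseteq \mathscr{K}^\theta$ and a variant $\kcal$ of $\sum \scrm$ forcing $E$, I must produce a $\parr$-variant of $\kcal$ inside $\mathscr{K}^\theta$. Since $E \in \lcalzpar$ and $\theta$ fixes parameters, $\theta(\kcal) \Vdash E$ as well, and every proper generated submodel of $\theta(\kcal)$ already forces $A$; the hypothesis $\vdashi E \to \theta(A)$ combined with the freedom to choose variable valuations at the new root of $\kcal$ (which translates, via $\theta$, into parameter-free adjustments at the root of $\theta(\kcal)$) should produce the desired $\parr$-variant. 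Making precise how a $\parr$-variant of $\kcal$ controls the $\theta$-image at the root --- and in particular checking that the resulting class is genuinely stable under all the required manipulations (possibly by refining $\mathscr{K}^\theta$ to include only models whose generated subframes already lie in members of $\mathcal{F}$) --- is where the technical heart of the argument lies.
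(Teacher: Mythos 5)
Your overall architecture is the same as the paper's: fix $n$ exceeding $\complexity{A}$ and every $\complexity{E}$ for $E\in\Gpar$, and harvest the members of $\Pi$ as formulas of complexity at most $n$ whose model classes are stable, $\leq_n$-downward closed and extendable, via \Cref{modA-downward-closed}, \Cref{extendable-downward-closur} and \Cref{par-proj-par-extendable}. The finiteness of $\Pi$ and property (1) go through as you describe.

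The gap is in property (2): the class you introduce there is the wrong one. Since $\vdashi E\to\theta(A)$, your $\mathscr K^\theta=\{\kcal:\kcal\Vdash E\text{ and }\theta(\kcal)\Vdash A\}$ is simply $\Mod{E}$. Its members need not force $A$, so it is not a subclass of $\mathscr K_A$ and cannot belong to your family $\mathcal F$; and even if you could identify $\mathscr K^\theta$ with $\Mod B$ for some $B\in\Pi$, that identification would yield $\vdashi E\to B$, not the required $\vdashi E\to\theta(B)$. The class you need is the class of \emph{images}, $\mathscr K:=\{\theta(\kcal):\kcal\Vdash E\}$, which is what the paper uses: each such $\theta(\kcal)$ forces $A$ (because $\kcal\Vdash\theta(A)$ iff $\theta(\kcal)\Vdash A$), so $\dclosur{\mathscr K}{n}\subseteq\Mod A$ by \Cref{36}, and the containment $\Mod B\supseteq\mathscr K$ translates exactly into $\vdashi E\to\theta(B)$. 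With this class, the $E$-extendability you flag as the ``technical heart'' is in fact the easy step: a variant of $\sum\scrm$ forcing $E$ has a $\parr$-variant in $\mathscr K$ obtained as the $\theta$-image of a suitable model of $E$, using that $\theta$ fixes parameters and that $E\in\lcalzpar$; the genuinely delicate point is preserving $E$-extendability under the closure $\dclosur{\cdot}{n}$, which is precisely \Cref{extendable-downward-closur} and which you already invoke.
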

\begin{proof}
Let  $m_0:=\complexity A$ and $m_1$ be the number $m$ given from 
\Cref{extendable-downward-closure-lc} and $n:=\max\{ m_0 , m_1\}$. 
For a given $E\in\Gpar$ and substitution $\theta$ with  
$\vdashi E\to\theta(A)$, we find some $B_E^\theta\in\lcalz$ such that:
\begin{enumerate}
    \item\label{par-proj-approx-1} $\complexity{B_E^\theta}\leq n$.
    \item\label{par-proj-approx-2} $B_E^\theta$ is $E$-projective.
    \item\label{par-proj-approx-3} $\vdashi B_E^\theta\to A$.
    \item\label{par-proj-approx-4} $\vdashi E\to \theta(B_E^\theta)$.
\end{enumerate}
Let us first see why this finishes the proof of this theorem. 
Define 
$$
\Pi:=\{B_E^\theta: E\in\Gpar \ \& \ \vdashi E\to\theta(A)\}.
$$
Since $\atom$ is finite, by
 (\ref{par-proj-approx-1}) we have the finiteness of $\Pi$. 
$\Gamma$-projectivity of elements of $\Pi$ is guaranteed by 
(\ref{par-proj-approx-2}). Item (\ref{par-proj-approx-3}) implies
$\vdashi \bigvee\Pi\to A$. Finally, by item (\ref{par-proj-approx-4}) 
we have $A\admspar\Pi$.

So, let us go back to find $B_E^\theta$ with the listed properties.
Define 
$$\mathscr K:=\{\theta(\kcal): \kcal\Vdash E\}.$$
It is easy to observe that $\mathscr K$ is stable and $E$-extendable.
Then \Cref{extendable-downward-closur} implies that
$\dclosur{\mathscr K} n$ is also stable and $E$-extendable. 
\Cref{modA-downward-closed} implies that there is some 
formula $B_E^\theta$ with $\complexity{B^\theta_E}\leq n$ such that 
$\dclosur{\mathscr K}n=\{\kcal: \kcal\Vdash B^\theta_E\}$. 
Hence (\ref{par-proj-approx-1}) is satisfied.
Since $\dclosur{\mathscr K} n$ is extendable, 
\Cref{par-proj-par-extendable} implies the validity of item (\ref{par-proj-approx-2}).
To check the validity of item (\ref{par-proj-approx-3}), we argue semantically. Let
$\kcal\Vdash B^\theta_E$. Therefore, $\kcal\leq_n\kcal'$ for some
$\kcal'\in\mathscr K$. Since $\kcal'\in\mathscr K$, 
we have $\kcal'=\theta(\kcal'')$ for some $\kcal''\Vdash E$. 
Since $\kcal''\Vdash E$ and $E\in\lcalzpar$, we also
have $\theta(\kcal'')\Vdash E$. On the other hand, since
$\vdashi E\to\theta(A)$, we have $\kcal''\Vdash E\to \theta(A)$.
Hence $\kcal''\Vdash \theta(A)$ and thus $\theta(\kcal'')\Vdash A$.
This means $\kcal'\Vdash A$ and since $\complexity A\leq n$  
and $\kcal\leq_n\kcal'$, \Cref{36} implies $\kcal\Vdash A$. Thus,
$\vdashi B^\theta_E\to A$.
For the item (\ref{par-proj-approx-4}), let $\kcal\Vdash E$.
Then $\theta(\kcal)\in\mathscr K\subseteq\dclosur{\mathscr K}n$.
Hence $\theta(\kcal)\Vdash B^\theta_E$ and thus $\kcal\Vdash \theta(B^\theta_E)$.
\end{proof}

Recall from \cref{sec-definition} that a $\parr$-extension of a logic is an extension by an
$\lcalzpar$-formula. 
\begin{theorem}\label[theorem]{par-proj-approx}
The unification type of $\parr$-extensions of intuitionistic logic is finitary. 
\end{theorem}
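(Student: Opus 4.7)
The plan is to invoke \Cref{par-proj-approx-general} with $\Gamma := \{E\}$ and extract from the resulting finite approximating set a finite complete set of unifiers of $A$ in the extension $\IPC + E$. Fix some $E \in \lcalzpar$, and let $A \in \lcalz$ be unifiable in $\IPC + E$; since substitutions fix parameters and $E$ is an axiom (not a schema), this unifiability is equivalent to the existence of a substitution $\tau_0$ with $\vdashi E \to \tau_0(A)$. Apply \Cref{par-proj-approx-general} to obtain a finite set $\Pi$ of $\{E\}$-projective formulas satisfying $\vdashi \bigvee \Pi \to A$ and $A \adms{}{\Gamma} \Pi$. For each $B \in \Pi$, pick a projective $E$-fier $\theta_B$ so that $\vdashi \theta_B(B) \lr E$ and $B \vdashi \theta_B(x) \lr x$ for every variable $x$.

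Next, I would verify that $\{\theta_B : B \in \Pi\}$ is a complete set of unifiers of $A$ in $\IPC + E$. Each $\theta_B$ is a unifier because $\vdashi B \to A$ (as $B$ is a disjunct of $\bigvee \Pi$) gives $\vdashi \theta_B(B) \to \theta_B(A)$, and combined with $\vdashi \theta_B(B) \lr E$ this yields $\vdashi E \to \theta_B(A)$. For completeness, take any unifier $\tau$ of $A$ in $\IPC + E$, so $\vdashi E \to \tau(A)$; the admissibility $A \adms{}{\Gamma} \Pi$ produces some $B \in \Pi$ with $\vdashi E \to \tau(B)$. Applying $\tau$ to the $A$-identity property $B \vdashi \theta_B(x) \lr x$ gives $\tau(B) \vdashi \tau\theta_B(x) \lr \tau(x)$, and using $\vdashi E \to \tau(B)$ we deduce $E \vdashi \tau(x) \lr \tau\theta_B(x)$. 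Setting $\lambda := \tau$, this says $\tau$ is less general than $\theta_B$ modulo $E$, so completeness holds in $\IPC + E$.

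The main obstacle is ensuring that the admissibility relation $A \adms{}{\Gamma} \Pi$ furnished by \Cref{par-proj-approx-general} actually selects a single disjunct $B \in \Pi$ (so that one can anchor $\tau$ to a specific $\theta_B$) rather than only yielding $\vdashi E \to \tau(\bigvee \Pi)$. If only the weaker disjunctive form is available, I would fall back on a Kripke-semantical argument using the construction underlying \Cref{par-proj-approx-general}: the sets $\mathscr{K}_B := \{\theta(\kcal) : \kcal \Vdash E\}$ associated with each $B$ partition the relevant model classes, and any unifier $\tau$ of $A$ in $\IPC + E$ induces, via the map $\kcal \mapsto \tau(\kcal)$, membership in the $\leq_n$-downward closure of some $\mathscr{K}_B$, from which the required $B$ may be chosen.
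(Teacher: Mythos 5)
Your proposal is correct and follows essentially the same route as the paper: apply \Cref{par-proj-approx-general} with $\Gamma=\{E\}$ and check that the projective $E$-fiers of the elements of $\Pi$ form a finite complete set of unifiers (your verification just spells out what the paper leaves as ``easily verified''). The worry in your final paragraph is moot, since the multi-conclusion admissibility $A\adms{}{\Gamma}\Pi$, as defined in the paper (and as delivered by item (4) in the proof of \Cref{par-proj-approx-general}), already provides a single $B\in\Pi$ with $\vdashi E\to\tau(B)$ for each unifier $\tau$.
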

\begin{proof}
Let $A\in\lcalz$ and $E\in\lcalzpar$ be given, seeking a finite complete set of unifiers 
of $A$ in the intuitionistic logic extended by $E$.
Take $\Gamma:=\{E\}$ and apply 
\Cref{par-proj-approx-general} to obtain a finite set $\Pi$ with the properties mentioned. Then it can
be easily verified that the set of 
projective $E$-fiers of elements of $\Pi$ serves as 
a finite complete set of unifiers, as desired.
\end{proof}

\section{Parametric Admissibility: Relative to  \texorpdfstring{$\Gamma$}{G}}\label[section]{sec-admissibility}

Given a logic $\sfL$, the (multi-conclusion) 
admissibility relation for $\sfL$ is defined as follows:
$$A\adms{}\sfL \Delta \quad \text{iff}\quad 
\forall\theta\ (\sfL\vdash\theta(A)\ \Rightarrow\ \exists\, B\in\Delta\ \   \sfL\vdash\theta(B)). $$
We simply write $A\adms{}\sfL B$ for $A\adms{}\sfL \{B\}$.
Given $E\in\lcalzpar$, we define $\adms{}E$ as the admissibility relation for intuitionistic logic strengthened by an additional axiom $E$. Note that since 
$E\in\lcalzpar$, this logic is closed under substitutions, and hence it is a logic indeed.
The admissibility for intuitionistic logic $\adms{}\IPC$ is annotated as $\adms{}{}$ in 
the literature \cite{Iemhoff-admissibility} and is known to be decidable \cite{Rybakov87}.
One may easily observe that
$A\adms{}{E} B$ iff $(E\to A)\adms{}{}(E\to B)$ for every $E\in\lcalzpar$. 
Thus, by characterising the admissibility relation $\adms{}{}$, we also have 
a characterisation of $\adms{}{E}$. 

Then we define the $\Gamma$-admissibility relation $\adms{}\Gamma$, as the intersection of all
$\adms{}E$'s with $E\in\Gpar=\Gamma\cap\parr$.
In other words:
$$
A\adms{}\Gamma B
\quad \text{iff}\quad 
\forall\,E\in\Gpar\  A\adms{}E B.
$$
This time, the characterisation of $\adms{}\Gamma$ is not that simple. 
For instance, the characterisation for $\Gamma=\NNIL$, the set of No Nested Implications on the Left,
is the main result of \cite{PLHA0}. This result has been used in an essential way in the proof for 
characterisation of intuitionistic provability logic \cite{PLHA}. In this paper, we will dig further and characterise one more salient case: $\Gamma=\lcalz$, the full language. 
It turns out that $A\adms{}{\lcalz}B$ iff $\vdashi A\to B$ (\Cref{cor-charac-parr-admissibility}).

There is yet another binary relation on $\lcalz$, called preservativity \cite{Visser02} and 
annotated as $A\pr_\Gamma B$, which is defined as follows:
$$A\pr_\Gamma B \quad \text{iff}\quad \forall\, E\in\Gamma\ (\vdashi E\to A \ \Longrightarrow \ \vdashi E\to B).$$
Note the difference between the role of $E$ in $\adms{}{\Gamma} $ and $\pr_\Gamma$. In the first, we quantify $E$ 
over the set $\Gpar$, a subset of $\Gamma$, whereas in the second, we quantify it over $\Gamma$ itself. Since both definitions are universal over $\Gamma$, the following trivially holds:
\begin{remark}
    If $\Gamma \subseteq \Delta$ then 
    ${\adms{}{\Gamma}}\supseteq {\adms{}{\Delta}}$ and 
    ${\pr_{\Gamma}}\supseteq {\pr_{\Delta}}$.
\end{remark}
In the following lemma, we show that $\Gamma$-admissibility 
implies $\proj\Gamma$-preservativity, where $\proj\Gamma$ is the set of $\Gamma$-projective formulas. 
\begin{lemma}\label[lemma]{lem-adms-impl-pres}
    ${\adms{}{\Gamma}}\subseteq{\pr_{\proj\Gamma}}$.
\end{lemma}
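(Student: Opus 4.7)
The plan is to prove the inclusion directly from the definitions by exploiting the projective substitution witnessing $\Gamma$-projectivity.

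First I would fix a $\Gamma$-projective formula $C$ with $\vdashi C\to A$, aiming to derive $\vdashi C\to B$. By definition of $\proj\Gamma$, there is some $E\in\Gpar$ and a substitution $\theta$ which is both a $C$-identity and an $E$-fier of $C$; that is, $C\vdashi \theta(x)\lr x$ for every variable $x$, and $\vdashi \theta(C)\lr E$. A standard induction on formula complexity (using that $\theta$ fixes parameters and commutes with connectives) extends this to $C\vdashi \theta(D)\lr D$ for every $D\in\lcalz$; in particular for $D=B$.

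Next I would chase the assumption $\vdashi C\to A$ through $\theta$: applying $\theta$ gives $\vdashi \theta(C)\to \theta(A)$, and combining with $\vdashi \theta(C)\lr E$ yields $\vdashi E\to \theta(A)$. Since $E\in\Gpar$, the hypothesis $A\adms{}{\Gamma}B$ specialises to $A\adms{}{E}B$, which, applied to the substitution $\theta$, delivers $\vdashi E\to \theta(B)$. Using $\vdashi \theta(C)\lr E$ once more, we obtain $\vdashi \theta(C)\to \theta(B)$.

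Finally I would cash in the $C$-identity property to transfer this back to $C$ and $B$: from $C\vdashi C\lr \theta(C)$ and $C\vdashi \theta(B)\lr B$, together with $\vdashi \theta(C)\to \theta(B)$, it follows that $C\vdashi B$, i.e.\ $\vdashi C\to B$, as required. There is no real obstacle here; the argument is essentially a tight reshuffling of the definitions. The only point that deserves care is the extension from $C\vdashi \theta(x)\lr x$ (on atomic variables) to $C\vdashi \theta(D)\lr D$ for arbitrary $D$, which is why one exploits both that $\theta$ is the identity on $\parr$ and that the language is closed under Boolean connectives.
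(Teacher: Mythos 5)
Your proof is correct and follows essentially the same route as the paper's: pick the projective substitution $\theta$ witnessing $\Gamma$-projectivity of the antecedent formula, push the hypothesis through $\theta$ to land in $\Gpar$, invoke $A\adms{}{\Gamma}B$ there, and pull back via the identity property of $\theta$. The only cosmetic difference is that you make explicit the routine induction extending $C\vdashi\theta(x)\lr x$ from atoms to arbitrary formulas, which the paper leaves implicit.
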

\begin{proof}
    Let $E\in \proj\Gamma$ such that $\vdashi E\to A$, seeking 
to show $\vdashi E\to B$. 
Since $E\in\proj\Gamma$, there is a substitution $\theta$
that projects $E$ to $\gampro E \Gamma\in\lcalzpar$.
Therefore, by $\vdashi E\to A$, we have $\vdashi \gampro E\Gamma\to \theta(A)$. Hence, by $A\adms{}\Gamma B $ we have
$\vdashi \gampro E\Gamma\to \theta(B)$. Since $\vdashi E\to \gampro E\Gamma$, we have
$\vdashi E\to  \theta(B)$, and thus by the $E$-projectivity of $\theta$, we have $\vdashi E\to B$.
\end{proof}

A $\parr$-substitution is a substitution $\theta$ such that $\theta(x)\in\parr$
for every $x\in\varr$. Then we say that $\Gamma$ is closed under 
$\parr$-substitutions if for every $A\in\Gamma$ and every $\parr$-substitution
$\theta$ we have $\theta(A)\in\Gamma$.
\begin{lemma}\label[lemma]{lem-charac-parr-admissibility}
    Given $\Gamma$ closed under $\parr$-substitutions and $A\in\Gamma$, 
    the following are equivalent:
\begin{enumerate}
    \item $\vdashi A\to B$.
    \item $A\adms{}{\Gamma} B$.
    \item $A\pr_{\proj\Gamma} B$.
\end{enumerate}
\end{lemma}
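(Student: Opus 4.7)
The plan is to close the triangle $(1)\Rightarrow(2)\Rightarrow(3)\Rightarrow(1)$. For $(1)\Rightarrow(2)$, I will use substitutivity of $\vdashi$: if $\vdashi A\to B$ then $\vdashi\theta(A)\to\theta(B)$ for every substitution $\theta$, so for every $E\in\Gpar$ the implication $\vdashi E\to\theta(A)\Rightarrow\vdashi E\to\theta(B)$ holds, yielding $A\adms{}{\Gamma}B$. The implication $(2)\Rightarrow(3)$ is precisely \Cref{lem-adms-impl-pres}.

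The heart of the argument is $(3)\Rightarrow(1)$. I would introduce a $\parr$-substitution $\sigma$ that sends each variable $x_i$ occurring in $A$ or $B$ to a distinct parameter $p_i$ not occurring in $A$ or $B$, and set $E:=\sigma(A)$. Since $A\in\Gamma$ and $\Gamma$ is closed under $\parr$-substitutions, $E\in\Gamma$; moreover $E\in\lcalzpar$, so $E\in\Gpar$. Applying \Cref{par-proj-approx-general} to the finite set $\{E\}$ and the formula $A$ delivers a finite set $\Pi$ of $\{E\}$-projective, and therefore $\Gamma$-projective, formulas with $\vdashi\bigvee\Pi\to A$ and $A\adms{}{\{E\}}\Pi$. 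Instantiating the second property at $\theta:=\sigma$, and noting that $\vdashi E\to\sigma(A)=E$ is trivial, yields some $C\in\Pi$ with $\vdashi E\to\sigma(C)$. From $C\in\proj\Gamma$ and $\vdashi C\to A$, hypothesis~(3) gives $\vdashi C\to B$, whence $\vdashi\sigma(C)\to\sigma(B)$; chaining these two facts produces $\vdashi\sigma(A)\to\sigma(B)$. Finally, substituting back the variables $\vec{x}$ for the fresh parameters $\vec{p}$ throughout this derivation returns $\vdashi A\to B$.

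The hard part is justifying this last step: the $\vec{p}$ are parameters, and the paper's formal substitutions keep them fixed, so the back-substitution is not a substitution in the paper's sense but a meta-level uniform substitution of atoms. This is legitimate because the axioms and inference rules of $\IPC$ are schematic in all atoms and the $\vec{p}$ were chosen fresh with respect to $A$ and $B$, so the substituted derivation is a genuine proof of $A\to B$. A smaller subtlety is that \Cref{par-proj-approx-general} is stated under the finite-atom convention of \Cref{sec-rel-proj-approximation}; however, the present application involves only the finitely many atoms of $A$, $B$ and $\{E\}$, so restricting to this finite sublanguage allows the theorem to be applied without modification.
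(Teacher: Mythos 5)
Your proposal is correct, and directions $(1)\Rightarrow(2)$ and $(2)\Rightarrow(3)$ match the paper exactly; the difference lies in $(3)\Rightarrow(1)$, where you take a genuinely different route. The paper argues contrapositively and entirely by hand: assuming $\nvdashi A\to B$, it sets $E:=A\wedge\bigwedge_{i=1}^n (x_i\lr p_i)$ with fresh parameters $p_i$, and checks directly that $E$ is $\Gamma$-projective (the witnessing substitution $x_i\mapsto p_i$ is an $E$-identity because $E$ forces $x_i\lr p_i$, and it projects $E$ to something equivalent to $\sigma(A)$, which lies in $\Gpar$ precisely by the hypotheses $A\in\Gamma$ and closure under $\parr$-substitutions), that $\vdashi E\to A$, and that $\nvdashi E\to B$ (else the same meta-level back-substitution of $x_i$ for $p_i$ that you invoke would yield $\vdashi A\to B$); this refutes $A\pr_{\proj\Gamma}B$ with a single explicit witness. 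You instead argue directly, setting $E:=\sigma(A)\in\Gpar$ and importing \Cref{par-proj-approx-general} to manufacture a $\Gamma$-projective formula $C$ with $\vdashi C\to A$ and $\vdashi E\to\sigma(C)$, then feeding $C$ into hypothesis~(3). Your route is sound --- there is no circularity, your instantiation of $A\adms{}{\{E\}}\Pi$ at $\theta=\sigma$ is legitimate, and your two caveats (the meta-level nature of the back-substitution, which the paper also relies on tacitly, and the finite-atom convention of \Cref{sec-rel-proj-approximation}) are correctly identified and correctly discharged --- but it is considerably heavier: it routes an elementary syntactic fact through the semantic approximation machinery of \Cref{extendable-downward-closur} and \Cref{modA-downward-closed}, whereas the paper's explicit $E$ makes the lemma self-contained and makes visible exactly where each hypothesis on $\Gamma$ is used.
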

\begin{proof}
The proof of $1\to 2$ is obvious. 
$2\to 3$ holds by \Cref{lem-adms-impl-pres}.
We reason for $3\to 1$ as follows. 
    Let $\nvdashi A\to B$. Also, assume that $X:=\{x_1,\ldots,x_n\}$ includes all variables 
    occurring in $A$, and $\{p_1,\ldots,p_n\}$ is a set of fresh parameters, i.e.~parameters not appeared in $A$ and $B$. Finally, assume that 
    $$E:=A\wedge \bigwedge_{i=1}^n x_i\lr p_i\quad \text{and}\quad \theta(x):=\begin{cases}
    p_i \quad &: x\in X \text{ and } x=x_i \text{ for } 1\leq i \leq n\\
    x &: x\nin X
    \end{cases} $$
It can be easily verified that 
\begin{itemize}
    \item $E$ is $\Gamma$-projective.
\item $\vdashi E\to A$.
\item $\nvdashi E\to B$.
\end{itemize}
This implies that $A\not\pr_{\proj\Gamma} B$, as desired.
\end{proof}
Note that in the proof of the above lemma,
we are taking advantage of the infiniteness of $\parr$.  

As a result, if we take $\Gamma$ as the full language 
$\lcalz$, we have the following:
\begin{corollary}\label[corollary]{cor-charac-parr-admissibility}
The following are equivalent:
\begin{enumerate}
    \item $\vdashi A\to B$.
    \item $A\adms{}{\lcalz} B$.
    \item $A\pr_{\proj\lcalz} B$.
\end{enumerate}    
\end{corollary}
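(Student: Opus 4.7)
The plan is to obtain this result as an immediate specialisation of \Cref{lem-charac-parr-admissibility} by taking $\Gamma := \lcalz$. Before appealing to that lemma, I would verify that its two hypotheses are satisfied in the present setting. First, the side condition $A \in \Gamma$ becomes $A \in \lcalz$, which holds vacuously for any formula of the language. Second, one must check that $\lcalz$ is closed under $\parr$-substitutions: this is immediate, since by the very definition of substitution (commutation with connectives, and mapping atomics to formulas) every substitution carries $\lcalz$ into $\lcalz$, and a $\parr$-substitution is just a particular kind of substitution.

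With the hypotheses met, the three equivalent items in \Cref{lem-charac-parr-admissibility} instantiate directly to the three items of the corollary. The only small piece of bookkeeping is to confirm that the notations match: $\Gpar = \lcalz \cap \lcalzpar = \lcalzpar$, so the intersection ranging over $\Gpar$ in the definition of $\adms{}{\lcalz}$ is exactly the intersection of all $\adms{}{E}$ for $E \in \lcalzpar$, and $\proj{\lcalz}$ in item (3) is literally the set of $\lcalz$-projective formulas, i.e.\ the $\parr$-projective formulas in the sense of \Cref{sec-definition}.

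I do not expect any real obstacle: the whole content of the corollary is packaged in the preceding lemma, and the proof reduces to one line, namely ``apply \Cref{lem-charac-parr-admissibility} with $\Gamma := \lcalz$.'' If anything needs a brief remark, it is that the proof of $3 \Rightarrow 1$ implicit in this specialisation relies on the infiniteness of $\parr$, since fresh parameters $p_1,\ldots,p_n$ must be chosen in the construction within the lemma's proof, but this is exactly the standing assumption made at the start of \Cref{sec-definition}.
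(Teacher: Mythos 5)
Your proposal is correct and follows exactly the paper's route: the corollary is obtained by instantiating \Cref{lem-charac-parr-admissibility} with $\Gamma:=\lcalz$, for which the hypotheses ($A\in\lcalz$ and closure of $\lcalz$ under $\parr$-substitutions) hold trivially. Your added remarks on the notation $\Gpar=\lcalzpar$ and on the role of the infiniteness of $\parr$ are accurate but not needed beyond what the paper already records.
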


Given that $A\adms{}{\lcalz}B$ is equivalent to $\vdashi A\to B$, asking for $\parr$-projective approximations of a proposition $A$ (\Cref{par-proj-approx-general}), simplifies 
to the following question.\\[2mm]
{\bf Question 2:} Given $A\in\lcalz$, is it possible to find a finite set $\Pi_A$ of $\lcalz$-projective propositions with $$\vdashi A\lr\bigvee\Pi_A.$$
\\[2mm]
By the arguments in this section, we already have 
a characterisation / decidability of $\adms{}{\Gamma}$ 
for finite $\Gamma$ and for $\Gamma=\lcalz$. 
Moreover, \cite{PLHA0} provides a characterisation for the case $\Gamma=\NNIL$, the set of No Nested Implications on the Left.\footnote{Since its appearance in the literature 
\cite{Visser-Benthem-NNIL}, 
$\NNIL$ has shown itself to 
be of great importance in the study of
intuitionistic logic.} 
\\[2mm]
{\bf Question 3:}  
Axiomatize or provide decision algorithm for 
$\adms{}\Gamma$ in the following cases for $\Gamma$:
\begin{enumerate}
    \item \textit{Weakly extendable} formulae: We say that $A$ is weakly extendable if every $\kcal$ with $\kcal\vdashm A$ has a variant $\kcal'$ such that $\kcal'\Vdash A$.
    \item \textit{Prime}
formulae:  A formula $A$ is prime if $\vdashi A\to(B\vee C)$ implies $\vdashi A\to B$ or $\vdashi A\to C$. Note that every extendable formula is prime, but not necessarily vice versa.\footnote{For example
the formula $\neg p \to (q\vee r)$ is prime, while it is not extendable.}
\end{enumerate}


\bibliographystyle{aiml}
\bibliography{aiml}

\end{document}